\newtheorem{thm}{Theorem}
\newtheorem{remark}{Remark}
\newtheorem{proposition}{Proposition}
\begin{document}

\title{Dynamics of a discrete-time mixed oligopoly Cournot-type model with three time delays}

\author{Loredana Camelia Culda$^{1}$ \and Eva Kaslik$^{1,*}$ \and
Mihaela Neam\c{t}u$^{1}$
}

\date{\noindent $^1$ West University of Timi\c{s}oara, Bd.  V. P\^{a}rvan nr. 4, 300223, Timi\c{s}oara, Romania\\
$^*$ Corresponding Author: {\it eva.kaslik@e-uvt.ro}}

\maketitle

\begin{abstract}
The paper analyzes the interactions among one public firm and $n$ private firms on the market, in the framework of a discrete-time Cournot game with time delay. The production of the public firm is influenced by previous output levels of private firms. The productions of private companies are influenced by the past productions of the public company, as well as by the previous productions of the other private companies.
The associated nonlinear system admits two equilibrium points: the positive one and the boundary equilibrium. After the stability analysis, we obtained that the boundary equilibrium point is a saddle
point. If there is no delay, for the positive equilibrium point we have determined the stability region. Then, for different particular cases of delays, we found the conditions for which the positive equilibrium is asymptotically stable. The flip and Neimark-Sacker bifurcations are investigated. In addition, numerous numerical examples are performed to reveal the complex dynamic behavior of the system.
\end{abstract}

\section{Introduction}
\label{sec:1}

Game theory is the field of study that focuses on analyzing the interactions between multiple individuals or teams in a game, given certain conditions, in order to determine the optimal strategies for each party. In mathematical economics, oligopoly theory is a topic of interest and the earliest branch of chaotic dynamics, which is based on research on chaos theory and bifurcation theory using different dynamical systems, has found extensive applications. The literature has taken into account a variety of oligopoly models, including those with or without product differentiation and those with one or more products. Time delay models are used to reflect actual conditions when there are delays in the decision-making processes, lead time, information implementation, or execution time \cite{guerrini2018delay,matsumoto2015delay,matsumoto2015nonlinear}.

 In a discrete or continuous time setting, dynamic duopoly games have been examined in the context of quantity-setting firms. Players with homogenous or heterogeneous characteristics have the following alternatives for their strategies: naive, adaptive, or bounded rational \cite{matsumoto2015nonlinear, agiza2003nonlinear,bischi2000multistability,bischi2000global,cao2021stability,chen2016global,elsadany2014dynamic,gori2015continuous,howroyd1984cournot,pecora2018heterogenous, zhou2023complexity}. In the field of economics, expectations pertains to the predictions or perspectives that individuals in charge of decision-making have regarding forthcoming prices, sales, incomes, or other relevant factors. According to the adaptive expectations hypothesis, the current expectations are a combination of past expectations.


 The practical application of a game like this closely mirrors economic reality, and it is commonly used in oligopolies. In \cite{askar2018exploration}, which relates to dynamic Cournot oligopoly games, there are three concurrent firms with bounded rationality that are all based on the utility CES function. Recently, a Cournot-Theocharis oligopoly model with a single time delay has been investigated in \cite{canovas2023delayed}, considering that firms make decisions based on adaptive expectations and assuming that information on competitors is only available after a time lag. In the numerical analysis of the equilibria for the corresponding nonlinear discrete-time mathematical model, complex behavior is found.
The dynamics of a mixed triopoly game, in which a public firm competes against two private firms, are examined in \cite{wang2021complex}. The equilibrium points are identified and their local stability is examined, taking into account both quantity and price competition. 
 In \cite{andaluz2020dynamic} the stability of the Nash equilibrium is examined for a dynamic model with $n$ firms which compete in an isoelastic demand setting with non-unitary elasticity framework. Additionally, it is noted in \cite{haraguchi2016cournot}  that privately owned firms may run into financial difficulties, which could force those firms to be nationalized, while state-controlled public firms are important to their market competitors. In \cite{culda2022dynamic}, the interactions of one public firm and $n$ private firms on the market are considered and the analysis of the corresponding  discrete-time Cournot game with two time delays is discussed.

 Also, in \cite{cavalli2023endogenous}, it has been observed that players tend to choose strategies that differ from those found in the Tullock Nash equilibrium. In \cite{purificato2023debt}, the authors studied the interactions between fiscal and monetary authorities in a monetary union during a debt stabilization process, assuming that policy authorities do not coordinate and cannot perfectly predict each other's decisions. 

The present study aims to advance earlier research by studying the impact of the number of private firms in the market on the stability of the equilibrium when information delays are taken into account. This is motivated by mixed competition as well as the multiple delays in the decision-making process. 
To be more explicit, we take into account one public firm and $n$ private firms that are engaged in the production of differentiated products within the context of a dynamic oligopoly game. The public firm decides on its output based on the expected marginal payoff, or the social surplus, while taking into account the historical production levels of the private firms \cite{wang2021complex, kawasaki2020endogenous}. Utilizing reaction functions and previous output from the public firm, the outputs of the private firms are determined. 

The major result is the characterization of the stability of the Nash equilibrium with respect to the quantity of private firms, the level of product differentiation, the adjustment parameter, and three time delays.

This paper's structure is outlined in the following. In Section \ref{sec:2}, the mathematical model is given, and two equilibrium points are identified: the positive equilibrium and the boundary equilibrium.
The local stability analysis for the boundary and the positive equilibrium points is covered in Section \ref{sec:3}. The theoretical results are exemplified using numerical simulations in Section \ref{sec:4}, which is then followed by conclusions and a discussion of future research options.

\section{Mathematical model}
\label{sec:2}
Let $q_0$ represent the output of the public firm and $q_i$, $i=\overline{1,n}$, stand for the output of the private firm $i$. The retail price for the public firm is $p_0$ and $p_i$, $i=\overline{1,n}$, for the private firm $i$.

The aim of the representative consumer is to maximize the following function \cite{singh1984price}:
\begin{equation}\label{beneficiu}
    U(q_0,q_1,...,q_n)-\sum_{i=0}^n p_iq_i,
\end{equation} 
 where \cite{wang2021complex, singh1984price, askar2021nonlinear}:
 
$$U(q_0,q_1,...,q_n)=a\sum_{i=0}^n q_i-\dfrac{b}{2}\left(\sum_{i=0}^nq_i^2+\delta \sum \limits_{i=0}^n\sum \limits_{i\not= j} q_iq_j\right),$$ with $a, b$ real positive numbers and $\delta\in(0,1)$ the degree of product differentiation.

The maximization problem of \eqref{beneficiu} leads to: 
\begin{equation}\label{pi}
p_i=a-bq_i-b\delta\sum \limits_{j=0, j\not = i}^nq_j, \quad i=\overline{0,n}.\end{equation}

The objective of the public firm is to maximize the social surplus, while the purpose of the private firm is to maximize the profit function. The profit function of firm $i$ is given by:
\begin{equation}\label{profit}
P_i=(p_i-c_i)q_i, \quad i=\overline{0,n},
\end{equation} with $c_i$ the marginal cost of firm $i$, and the social surplus is \cite{wang2021complex}:
\begin{equation}\label{sw}
 SW(q_0,q_1,..,q_n)=a\sum_{i=0}^nq_i-\dfrac{b}{2}\left(\sum_{i=0}^nq_i^2+\delta \sum \limits_{i=0}^n\sum \limits_{i\not= j} q_iq_j\right)-\sum_{i=0}^np_iq_i+\sum_{i=0}^nP_i.
\end{equation} 
We consider the same marginal costs for all the private firms: $c_1=c_2=...=c_n=c$.

The maximization problems lead to:
\begin{equation*}
\begin{cases}
    \dfrac{\partial SW}{\partial q_0}(q_0,q_1,...,q_n)=0,\\
    \dfrac{\partial P_i}{\partial q_i}(q_0,q_1,...,q_n)=0, i=\overline{1,n},
\end{cases}
\end{equation*} or equivalently 
\begin{equation}\label{q0}
a-c_0-bq_0-b\delta\sum \limits_{i=1}^nq_i=0.
\end{equation} 
with $a>c_0\geq c$, and 
\begin{equation}\label{qi0}
p_i-c-bq_i=0, \quad i=\overline{1,n}.
\end{equation} 
From (\ref{pi}), (\ref{q0}) and (\ref{qi0}) we have:
\begin{equation}\label{qi}
\begin{cases}
q_0=\dfrac{a_0}{b}-\delta\sum \limits_{i=1}^nq_i,\\
q_i=\dfrac{a_1}{2b}-\dfrac{\delta}{2}\sum \limits_{j=0,j\not = i}^nq_j, \quad i=\overline{1,n},
\end{cases}
\end{equation} where $a_0=a-c_0>0$ and $a_1=a-c>0$. 

As the public firm has bounded rationality and the private firm $i$, $i=\overline{1,n}$, is naive, the dynamical equations for the outputs are given by \cite{culda2022dynamic}:
$$q_0(t+1)=q_0(t)+\alpha q_0(t)\left[a_0-bq_0(t)-b\delta \sum \limits_{i=1}^n q_i(t)\right],$$ where $\alpha$ is the positive adjustment parameter,
$$q_j(t+1)=\dfrac{a_1}{2b}-\dfrac{\delta}{2}\sum_{i=0,i\neq j}^n q_i(t),\, j=\overline{1,n}.$$

As in \cite{elsadany2010dynamics}, we consider that the output of the public firm is influenced by the past output levels of the private firms (at time $t-\tau_1$, $\tau_1>0$). Moreover, as in \cite{culda2022dynamic}, the productions of private firms are set up according to the past productions (at time $t-\tau_0$, $\tau_0>0$) of the public firm. Furthermore, in the present paper we also adjust the productions of private firms with the past
productions (at time $t-\tau_2$, $\tau_2>0$) of the other private firms. 

Therefore, in this paper, we investigate the following nonlinear discrete-time  mathematical model with time delays: 
\begin{equation}\label{system}
\begin{cases}
q_0(t+1)=q_0(t)+\alpha q_0(t)\left[a_0-bq_0(t)-b\delta \sum \limits_{i=1}^n q_i(t-\tau_1)\right]\\
q_j(t+1)=\dfrac{a_1}{2b}-\dfrac{\delta}{2}q_0(t-\tau_0)-\dfrac{\delta}{2}\sum\limits_{i=1,i\neq j}^n q_i(t-\tau_2)\quad ,\quad j=\overline{1,n}.
\end{cases}
\end{equation}

The equilibrium points of the discrete dynamical system \eqref{system} are:
\[
E_0=(0, q^\star,q^\star,...,q^\star),~\text{where }q^\star =\dfrac{a_1}{b[2+(n-1)\delta]}
\]
and

\[E_+=(q_0^\star,q_1^\star,q_1^\star...,q_1^\star),~\text{where } q_0^\star=\dfrac{[2+(n-1)\delta]a_0- n \delta a_1}{b[2+(n-1)\delta-n\delta^2]}~,~
q_1^\star=\dfrac{a_1-\delta a_0}{b[2+(n-1)\delta-n\delta^2]}.\]
Due to the fact that $\delta\in(0,1)$, the positivity of the equilibrium $E_+$ is equivalent to the following assumptions: 
\begin{align*}
\text{(A.1)}&\qquad [2+(n-1)\delta]a_0>n\delta a_1~,\\
\text{(A.2)}&\qquad a_1>\delta a_0~.
\end{align*}

\section{Local stability and bifurcation analysis}
\label{sec:3}

The liniarized system at one of the equilibrium points $E=(q_0^e,q_1^e,q_1^e,...,q_1^e)\in\{E_0,E_+\}$ is of the form: 
\begin{equation}\label{eq.liniarized}
    y(t+1)=A^ey(t)-B_0 y(t-\tau_0)-B_1^ey(t-\tau_1)-B_2 y(t-\tau_2)
\end{equation}
where 
\[y(t)=\left[
\begin{array}{cccc}
    q_0(t)-q_0^e & q_1(t)-q_1^e & \dots & q_n(t)-q_1^e 
\end{array}
\right]^T\]
and the matrices $A^e,B_0,B_1^e,B_2$ are given below: 

\[A^e=\begin{bmatrix}
1+\alpha(a_0-2bq_0^e-nb\delta q_1^e) & 0 & \ldots& 0\\
0 & 0 & \ldots& 0\\
\vdots & \vdots & \ddots&\vdots\\
0& 0 & \ldots &0
\end{bmatrix}\qquad,\qquad B_0=\begin{bmatrix}
0 & 0 & \ldots & 0\\
\delta/2 & 0 &  \ldots& 0\\
\vdots & \vdots & \ddots&\vdots\\
\delta/2 & 0 &  \ldots& 0\\
\end{bmatrix}\]
\[
B_1^e=\begin{bmatrix}
0 & b\alpha\delta q_0^e & \ldots & b\alpha\delta q_0^e\\
0 & 0 &  \ldots& 0\\
\vdots & \vdots & \ddots&\vdots\\
0 & 0 &  \ldots& 0\\
\end{bmatrix}
\qquad,\qquad B_2=\begin{bmatrix}
 0 & -\delta/2 & \ldots& -\delta/2\\
 -\delta/2 &0 & \ldots&-\delta/2\\
 \vdots & \vdots & \ddots&\vdots\\
-\delta/2&-\delta/2&\ldots&0
\end{bmatrix}\]

The characteristic equation of system \eqref{eq.liniarized} can be obtained using the $\mathcal{Z}$-transform method, and is given as follows:
\[
    \det\left(A^e-B_0\lambda^{-\tau_0}-B_1^e\lambda^{-\tau_1}-B_2\lambda^{-\tau_2}-\lambda I\right)=0,
\]
or equivalently: 
\begin{equation}\label{ec.char.general}
\left(\!\lambda\!-\!\dfrac{\delta}{2}\lambda^{-\tau_2}\right)^{\!n-1}\!\!\left[nb\alpha q_0^e\dfrac{\delta^2}{2}\lambda^{-\tau_0-\tau_1}\!\!-\!\!\left(\lambda\!\!-\!\!1\!-\!\alpha(a_0\!-\!2bq_0^e\!-\!nb\delta q_1^e)\right)\left(\!\lambda\!+\!(n\!-\!1)\dfrac{\delta}{2}\lambda^{-\tau_2}\right)\!\right]\!=\!0.
\end{equation}

In what follows, we analyze each of the equilibrium points $E_0$ and $E_+$. 

\subsection[The boundary equilibrium E0]{The boundary equilibrium $E_0$}

\begin{thm}
If assumption $(A.1)$ holds, the boundary equilibrium point $E_0$ is a saddle
point.
\end{thm}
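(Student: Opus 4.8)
The plan is to evaluate the characteristic equation \eqref{ec.char.general} at $E_0=(0,q^\star,\dots,q^\star)$ and to exhibit explicit roots lying both strictly inside and strictly outside the unit circle, which is precisely what characterizes a saddle point for the discrete map \eqref{system}. First I would substitute $q_0^e=0$ and $q_1^e=q^\star=\frac{a_1}{b[2+(n-1)\delta]}$ into \eqref{ec.char.general}. Since $q_0^e=0$, the mixed term $nb\alpha q_0^e\frac{\delta^2}{2}\lambda^{-\tau_0-\tau_1}$ vanishes, and the equation factors as
\[
\left(\lambda-\frac{\delta}{2}\lambda^{-\tau_2}\right)^{n-1}\left(\lambda-1-\alpha(a_0-nb\delta q^\star)\right)\left(\lambda+(n-1)\frac{\delta}{2}\lambda^{-\tau_2}\right)=0,
\]
so that the spectrum splits into three groups of roots which can be analyzed separately.

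Next I would locate the unstable direction. A direct computation gives $a_0-nb\delta q^\star=\frac{[2+(n-1)\delta]a_0-n\delta a_1}{2+(n-1)\delta}$, which is strictly positive precisely under assumption (A.1). The middle factor is linear in $\lambda$ and yields the real root $\lambda_+=1+\alpha(a_0-nb\delta q^\star)$; since $\alpha>0$ and the quantity in parentheses is positive, we obtain $\lambda_+>1$, so $E_0$ possesses at least one eigenvalue outside the unit disk.

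Then I would produce a root inside the unit disk to secure the stable direction. For $n\geq 2$ the factor $\left(\lambda-\frac{\delta}{2}\lambda^{-\tau_2}\right)^{n-1}$ contributes the roots of $\lambda^{\tau_2+1}=\frac{\delta}{2}$, all of modulus $\left(\frac{\delta}{2}\right)^{1/(\tau_2+1)}<1$ because $\delta\in(0,1)$. In the degenerate case $n=1$ the last two factors reduce to $\lambda\left(\lambda-1-\alpha(a_0-b\delta q^\star)\right)$, so $\lambda=0$ is an interior root. Either way, at least one eigenvalue lies strictly inside the unit circle, and combining this with the previous step shows that the linearization at $E_0$ is hyperbolic with both a nontrivial stable and a nontrivial unstable subspace, which is the definition of a saddle.

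The main technical point to handle carefully is the passage from the transcendental form of \eqref{ec.char.general} to a genuine finite-degree polynomial: I would multiply through by a suitable power of $\lambda$ to clear the negative exponents introduced by the delays, so that the roots exhibited above are bona fide eigenvalues of the finite-dimensional monodromy operator associated with the delayed system and the hyperbolicity claim becomes unambiguous. The only genuine case distinction is the separate treatment of $n=1$, where the repeated factor disappears and the interior root is furnished instead by $\lambda=0$.
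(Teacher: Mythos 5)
Your proof is correct and follows essentially the same route as the paper's: evaluate the characteristic equation \eqref{ec.char.general} at $E_0$, extract the real root $\lambda_+=1+\alpha\left(a_0-nb\delta q^\star\right)>1$ guaranteed by assumption (A.1), and observe that the delay factor supplies roots satisfying $\lambda^{\tau_2+1}=\frac{\delta}{2}<1$, hence inside the unit disk. Your additional care with the degenerate case $n=1$ (where the interior root is instead $\lambda=0$) and with clearing the negative powers of $\lambda$ is a slight refinement over the paper, whose argument tacitly assumes $n\geq 2$; the paper also drops the factor $\alpha$ when displaying the unstable root $\lambda_1$, a typo you implicitly correct.
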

\begin{proof}
At the boundary equilibrium point $E_0$, as $q_0^e=0$ and $q_1^e=q^\star$, the characteristic equation \eqref{ec.char.general} reduces to: 
\begin{equation}\label{ec.char.gen.}
\left(\lambda-\dfrac{\delta}{2}\lambda^{-\tau_2}\right)^{n-1}\left(\lambda\!-\!1-\alpha \frac{(2+(n-1)\delta)a_0-n\delta a_1}{2+(n-1)\delta}\right)\left(\lambda+(n\!-\!1)\dfrac{\delta}{2}\lambda^{-\tau_2}\right)=0.
\end{equation}
We notice that one root of \eqref{ec.char.gen.} is 
$\lambda_1=1+\dfrac{(2+(n-1)\delta)a_0-n\delta a_1}{2+(n-1)\delta}>1$, due to assumption $(A.1)$.

On the other hand, we can notice that the characteristic equation \eqref{ec.char.gen.} also admits some roots inside the unit disk, which satisfy:
\[\lambda^{\tau_2+1}=\frac{\delta}{2}<1.\]
In conclusion, the equilibrium $E_0$ is a
saddle point of system \eqref{system}.
\end{proof}
\subsection[The positive equilibrium Eplus]{The positive equilibrium $E_+$}

As in this case $q_0^e=q_0^\star$ and $q_1^e=q_1^\star$, 
the characteristic equation \eqref{ec.char.general} becomes
\begin{equation}\label{ec.char.Eplus}
\left(\lambda-\dfrac{\delta}{2}\lambda^{-\tau_2}\right)^{n-1}\left[\varepsilon_0(\varepsilon_1+1)\lambda^{-\tau_0-\tau_1}-(\lambda+\varepsilon_1)\left(\lambda+\varepsilon_2\lambda^{-\tau_2}\right)\right]=0,
\end{equation}
where 
\begin{equation}\label{eq.beta}
\varepsilon_0=n\dfrac{\delta^2}{2}>0\quad,\quad\varepsilon_1+1=\alpha\dfrac{[2+(n-1)\delta]a_0- n \delta a_1}{2+(n-1)\delta-n\delta^2}>0\quad\text{ and }\quad \varepsilon_2=(n-1)\dfrac{\delta}{2}>0.\end{equation}

Some of the roots of \eqref{ec.char.Eplus} are given by
\[\lambda^{\tau_2+1}=\frac{\delta}{2}<1,\]
and hence, these roots belong to the open unit disk. Therefore, the stability of the equilibrium point $E_+$ is determined by the roots of the following reduced equation:
\begin{equation}\label{ec.char.reduced}
\varepsilon_0(\varepsilon_1+1)\lambda^{-\tau_0-\tau_1}-(\lambda+\varepsilon_1)\left(\lambda+\varepsilon_2\lambda^{-\tau_2}\right)=0.
\end{equation}

In the absence of time delays, based on the Schur-Cohn stability conditions, the following result has been obtained in  \cite{culda2022dynamic} regarding the asymptotic stability of the equilibrium point $E_+$:

\begin{thm}
\label{thm.stab.no.delays}
If assumptions $(A.1)$ and $(A.2)$ hold, when $\tau_0=\tau_1=\tau_2=0$, the equilibrium point $E_+$ is asymptotically stable if and only if $(\varepsilon_0,\varepsilon_1,\varepsilon_2)$ belong to the \textbf{delay-free stability region} defined by the following inequalities: 
\begin{equation}\label{ineq.stab.dom}
   \varepsilon_2<1\qquad\text{and}\qquad \varepsilon_1<\dfrac{1-\varepsilon_2-\varepsilon_0}{1-\varepsilon_2+\varepsilon_0}.
\end{equation}
\end{thm}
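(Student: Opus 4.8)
The plan is to collapse the characteristic equation \eqref{ec.char.Eplus} to a single quadratic and then apply the Schur--Cohn (Jury) test. Setting $\tau_0=\tau_1=\tau_2=0$, the factor $\left(\lambda-\frac{\delta}{2}\lambda^{-\tau_2}\right)^{n-1}$ becomes $\left(\lambda-\frac{\delta}{2}\right)^{n-1}$, which contributes only the root $\lambda=\delta/2\in(0,1)$ (as $\delta\in(0,1)$); these $n-1$ roots lie in the open unit disk and are irrelevant for stability. Hence $E_+$ is asymptotically stable if and only if both roots of the reduced equation \eqref{ec.char.reduced}, which here reads
\begin{equation*}
P(\lambda)=\lambda^2+(\varepsilon_1+\varepsilon_2)\lambda+\big[\varepsilon_1\varepsilon_2-\varepsilon_0(\varepsilon_1+1)\big]=0,
\end{equation*}
belong to the open unit disk.

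For a monic real quadratic $\lambda^2+a_1\lambda+a_0$ the Schur--Cohn criterion places both roots in the open unit disk if and only if $P(1)>0$, $P(-1)>0$ and $|a_0|<1$. I would factor the two endpoint values as
\begin{equation*}
P(1)=(1+\varepsilon_1)(1+\varepsilon_2-\varepsilon_0),\qquad P(-1)=(1-\varepsilon_2-\varepsilon_0)-\varepsilon_1(1-\varepsilon_2+\varepsilon_0),
\end{equation*}
the second grouping being the decisive one. Since $\varepsilon_1+1>0$ by \eqref{eq.beta}, the sign of $P(1)$ is that of $1+\varepsilon_2-\varepsilon_0$; substituting $\varepsilon_0=n\delta^2/2$ and $\varepsilon_2=(n-1)\delta/2$ turns $P(1)>0$ into $n\delta^2-(n-1)\delta-2<0$, a one-line check (the positive root of this upward parabola exceeds $1$ for all admissible $n$ and all $\delta\in(0,1)$). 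Thus $P(1)>0$ holds automatically and is discarded.

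When $1-\varepsilon_2+\varepsilon_0>0$, dividing $P(-1)>0$ by this quantity yields exactly $\varepsilon_1<\frac{1-\varepsilon_2-\varepsilon_0}{1-\varepsilon_2+\varepsilon_0}$, the second inequality in \eqref{ineq.stab.dom}. This inequality also forces $\varepsilon_1<1$ (its right-hand side is always below $1$ because $\varepsilon_0>0$), while $\varepsilon_2<1$ is precisely the condition that its right-hand side exceeds $-1$, making it compatible with the standing bound $\varepsilon_1>-1$ coming from \eqref{eq.beta}. With $\varepsilon_1\in(-1,1)$ and $\varepsilon_2\in(0,1)$ secured, the constant-term condition becomes redundant: $a_0>-1$ already follows from $P(1)>0$ and $P(-1)>0$ (if $a_0\le-1$ these two numbers cannot both be positive), and $a_0=\varepsilon_1\varepsilon_2-\varepsilon_0(\varepsilon_1+1)<\varepsilon_1\varepsilon_2<1$. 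So the three Schur--Cohn conditions reduce to the two inequalities \eqref{ineq.stab.dom} obtained in \cite{culda2022dynamic}.

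The main obstacle is the necessity argument in the regime $1-\varepsilon_2+\varepsilon_0\le0$ (equivalently $\varepsilon_2\ge 1+\varepsilon_0>1$), which can genuinely occur for large $n$ and intermediate $\delta$. There the division above reverses, so one cannot read off \eqref{ineq.stab.dom} directly and must instead prove that stability is impossible: I would show that in this regime $P(-1)>0$ drives $\varepsilon_1$ so high that $|a_0|<1$ is violated, using the link between $\varepsilon_0$ and $\varepsilon_2$ forced by $\varepsilon_0=n\delta^2/2$ and $\varepsilon_2=(n-1)\delta/2$. Handling this sign bookkeeping is the only delicate point; once it is settled, asymptotic stability is equivalent to $\varepsilon_2<1$ together with the stated bound on $\varepsilon_1$, i.e. to \eqref{ineq.stab.dom}.
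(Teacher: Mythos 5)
The paper itself contains no proof of this theorem: it is quoted from \cite{culda2022dynamic} with the remark that it rests on the Schur--Cohn conditions, and your proposal follows exactly that route (discard the $n-1$ roots $\lambda=\delta/2\in(0,1)$ coming from the first factor of \eqref{ec.char.Eplus}, reduce to the monic quadratic $P$, apply the Jury test). All the steps you actually carry out are correct: the factorizations of $P(1)$ and $P(-1)$, the observation that $P(1)>0$ holds automatically (equivalently $2+(n-1)\delta-n\delta^2=2(1+\varepsilon_2-\varepsilon_0)>0$ for all $\delta\in(0,1)$ and $n\geq 1$), the equivalence of $P(-1)>0$ with the second inequality of \eqref{ineq.stab.dom} when $1-\varepsilon_2+\varepsilon_0>0$, the identification of $\varepsilon_2<1$ as exactly the condition making that inequality compatible with the standing bound $\varepsilon_1>-1$ from \eqref{eq.beta}, and the redundancy of $|a_0|<1$ inside the region.

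The one step you leave as a plan---necessity in the regime $1-\varepsilon_2+\varepsilon_0\le 0$---does go through, and more simply than you anticipate: no link between $\varepsilon_0$ and $\varepsilon_2$ through $(n,\delta)$ is needed. Put $s=\varepsilon_2-\varepsilon_0$; in this regime $s\ge 1$. If $s=1$, then $P(-1)=1-\varepsilon_2-\varepsilon_0=-2\varepsilon_0<0$, so stability fails outright. If $s>1$, then $P(-1)>0$ forces $\varepsilon_1>\dfrac{\varepsilon_2+\varepsilon_0-1}{s-1}>0$, and hence
\[
a_0=\varepsilon_1 s-\varepsilon_0>\frac{(\varepsilon_2+\varepsilon_0-1)s}{s-1}-\varepsilon_0=s+\frac{\varepsilon_0(s+1)}{s-1}>1,
\]
violating $|a_0|<1$. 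So stability is impossible whenever $1-\varepsilon_2+\varepsilon_0\le 0$, which is consistent with the theorem since its region is empty there ($\varepsilon_2\ge 1+\varepsilon_0>1$). With this short computation inserted in place of your sketch, your argument is a complete and correct proof, by the same Schur--Cohn method the paper attributes to \cite{culda2022dynamic}.
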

This theorem represents a generalization of the result  presented in \cite{wang2021complex}, where the case of two private firms ($n=2$) has been investigated.

\begin{remark}\label{rem.ineg}
    If inequalities \eqref{ineq.stab.dom} hold, it follows that $\varepsilon_1<1$ and: 
\begin{align*}
(1-\varepsilon_1)\left(1-\varepsilon_2\right)&>\left(1-\dfrac{1-\varepsilon_2-\varepsilon_0}{1-\varepsilon_2+\varepsilon_0}\right)\left(1-\varepsilon_2\right)\\
&=\varepsilon_0\dfrac{2(1-\varepsilon_2)}{1-\varepsilon_2+\varepsilon_0}\\
&>\varepsilon_0(\varepsilon_1+1).
\end{align*}
    
\end{remark}

In what follows, we describe two situations related to the time delays, where inequalities \eqref{ineq.stab.dom} provide sufficient conditions for the asymptotic stability of $E_+$. 

\begin{thm}\label{thm.stab.delays.tau2.0}
Assume that $\tau_0\geq 0$, $\tau_1\geq 0$ and $\tau_2=0$. If the assumptions $(A.1)$ and $(A.2)$ hold and inequalities \eqref{ineq.stab.dom} are satisfied, the equilibrium point $E_+$ is asymptotically stable.
\end{thm}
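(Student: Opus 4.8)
The plan is to reduce everything to the scalar transcendental equation obtained from \eqref{ec.char.reduced} by setting $\tau_2=0$, and then to rule out roots outside the open unit disk by a single modulus estimate. Writing $\tau=\tau_0+\tau_1\ge 0$, the reduced equation \eqref{ec.char.reduced} with $\tau_2=0$ reads $\varepsilon_0(\varepsilon_1+1)\lambda^{-\tau}=(\lambda+\varepsilon_1)(\lambda+\varepsilon_2)$. Since $\varepsilon_0(\varepsilon_1+1)>0$, the value $\lambda=0$ cannot be a root, so I may multiply through by $\lambda^{\tau}$ and work with the equivalent form $(\lambda+\varepsilon_1)(\lambda+\varepsilon_2)\lambda^{\tau}=\varepsilon_0(\varepsilon_1+1)$. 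Recall that the remaining factor $(\lambda-\tfrac{\delta}{2}\lambda^{-\tau_2})^{n-1}$ of \eqref{ec.char.Eplus} only contributes roots with $\lambda^{\tau_2+1}=\delta/2$, which lie strictly inside the unit disk, so the asymptotic stability of $E_+$ is governed solely by this scalar equation.

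Next I would argue by contradiction: suppose there exists a characteristic root $\lambda$ with $|\lambda|\ge 1$. Taking moduli in $(\lambda+\varepsilon_1)(\lambda+\varepsilon_2)\lambda^{\tau}=\varepsilon_0(\varepsilon_1+1)$ gives $|\lambda+\varepsilon_1|\,|\lambda+\varepsilon_2|\,|\lambda|^{\tau}=\varepsilon_0(\varepsilon_1+1)$. Because $\varepsilon_1,\varepsilon_2>0$ are real and $|\lambda|\ge 1$, the reverse triangle inequality yields $|\lambda+\varepsilon_i|\ge |\lambda|-\varepsilon_i\ge 1-\varepsilon_i$ for $i=1,2$, while $|\lambda|^{\tau}\ge 1$. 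Here I use that the hypotheses \eqref{ineq.stab.dom} force $\varepsilon_2<1$ and, via Remark \ref{rem.ineg}, also $\varepsilon_1<1$, so both lower bounds $1-\varepsilon_1$ and $1-\varepsilon_2$ are positive and the chain of inequalities is meaningful.

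Combining these bounds gives $\varepsilon_0(\varepsilon_1+1)=|\lambda+\varepsilon_1|\,|\lambda+\varepsilon_2|\,|\lambda|^{\tau}\ge (1-\varepsilon_1)(1-\varepsilon_2)$. On the other hand, Remark \ref{rem.ineg} establishes precisely that $(1-\varepsilon_1)(1-\varepsilon_2)>\varepsilon_0(\varepsilon_1+1)$ whenever \eqref{ineq.stab.dom} holds. These two statements are incompatible, producing the desired contradiction; hence no root satisfies $|\lambda|\ge 1$, every root of \eqref{ec.char.Eplus} lies in the open unit disk, and $E_+$ is asymptotically stable.

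The argument is essentially self-contained once Remark \ref{rem.ineg} is available, and I expect the only real subtleties to be bookkeeping: confirming that $\lambda=0$ is not a root before clearing the negative power $\lambda^{-\tau}$, and checking that the reverse-triangle lower bounds stay positive (which is exactly where $\varepsilon_1<1$ and $\varepsilon_2<1$ enter). The genuinely delicate estimate $(1-\varepsilon_1)(1-\varepsilon_2)>\varepsilon_0(\varepsilon_1+1)$ has already been isolated in Remark \ref{rem.ineg}, so the crux is simply recognizing that $|\lambda|^{\tau}\ge 1$ holds uniformly in $\tau_0,\tau_1$ on $|\lambda|\ge 1$; this is what renders the conclusion independent of the delay values and lets the delayed case inherit the delay-free bound of Theorem \ref{thm.stab.no.delays}.
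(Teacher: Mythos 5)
There is a genuine gap: you assume $\varepsilon_1>0$, but the definition \eqref{eq.beta} only guarantees $\varepsilon_1>-1$, since $\varepsilon_1+1=\alpha\,\dfrac{[2+(n-1)\delta]a_0-n\delta a_1}{2+(n-1)\delta-n\delta^2}$ is positive but can be arbitrarily small. For small values of the adjustment parameter $\alpha$ --- which is precisely the deepest part of the stability region \eqref{ineq.stab.dom} --- one has $-1<\varepsilon_1<0$, and there your key step fails: the reverse triangle inequality gives $|\lambda+\varepsilon_1|\ge|\lambda|-|\varepsilon_1|=|\lambda|+\varepsilon_1$, not $|\lambda|-\varepsilon_1$. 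Concretely, for $\lambda=1$ and $\varepsilon_1=-1/2$ you would claim $|\lambda+\varepsilon_1|=1/2\ge 3/2$, which is false. The obvious repair --- using the correct bound $(1+\varepsilon_1)(1-\varepsilon_2)$ --- is also insufficient: after cancelling $1+\varepsilon_1>0$ you would need $1-\varepsilon_2>\varepsilon_0$, and this does \emph{not} follow from \eqref{ineq.stab.dom}. Indeed, \eqref{ineq.stab.dom} is satisfiable with $\varepsilon_0+\varepsilon_2>1$: for $n=4$, $\delta=0.6$ one gets $\varepsilon_0=0.72$, $\varepsilon_2=0.9$, and every $\varepsilon_1\in\left(-1,-\tfrac{0.62}{0.82}\right)$ (i.e.\ every small enough $\alpha$) lies in the stability region. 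The underlying reason your estimate is too lossy when $\varepsilon_1<0$ is that the two factors are then minimized at antipodal points of the unit circle ($|e^{i\theta}+\varepsilon_1|$ at $\theta=0$, $|e^{i\theta}+\varepsilon_2|$ at $\theta=\pi$), so bounding them separately by their individual minima discards essential information.

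This is exactly where the paper's proof diverges from yours. After restricting to roots on the unit circle (via continuous dependence of roots on the delays), the paper treats two cases. For $\varepsilon_1>0$ it argues as you do, and there your proof is fine --- in fact your direct exclusion of all roots with $|\lambda|\ge 1$ is a nice simplification of the paper's continuity argument. For $\varepsilon_1\le 0$ it writes the squared modulus product as $P(\cos\theta)$ with $P(x)=\left(2\varepsilon_1 x+\varepsilon_1^2+1\right)\left(2\varepsilon_2 x+\varepsilon_2^2+1\right)$, notes that $P$ is concave (leading coefficient $4\varepsilon_1\varepsilon_2\le 0$), hence minimized on $[-1,1]$ at an endpoint, and checks $\min\{P(-1),P(1)\}>\varepsilon_0^2(\varepsilon_1+1)^2$: the endpoint $P(-1)=(1-\varepsilon_1)^2(1-\varepsilon_2)^2$ is covered by Remark \ref{rem.ineg}, while $P(1)=(1+\varepsilon_1)^2(1+\varepsilon_2)^2$ needs the additional model fact $1+\varepsilon_2>\varepsilon_0$, which holds because $2(1+\varepsilon_2-\varepsilon_0)=2+(n-1)\delta-n\delta^2>0$ for all $\delta\in(0,1)$. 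Your argument could be repaired along the same lines (the concavity bound works just as well with $r=|\lambda|\ge 1$ in place of $1$), but as written it does not cover the case $\varepsilon_1\in(-1,0]$, so the proof is incomplete.
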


\begin{proof}
Theorem \ref{thm.stab.no.delays} provides that if assumptions $(A.1)$, $(A.2)$ and inequalities \eqref{ineq.stab.dom} hold, the equilibrium $E_+$ is asymptotically stable for null time delays. Assuming by contradiction that asymptotic stability of the equilibrium point is lost for certain values of the time delays, based on the continuous dependence of the roots of the characteristic equation \eqref{ec.char.reduced} on $\tau_0,\tau_1$,  it follows that there exist critical values $(\tau_0^*,\tau_1^*)$, such that the equation \eqref{ec.char.reduced} has some roots $\lambda$ belonging to the unit circle.

The characteristic equation \eqref{ec.char.reduced} can be rewritten as:
\[
\varepsilon_0(\varepsilon_1+1)\lambda^{-\tau_0-\tau_1}=(\lambda+\varepsilon_1)\left(\lambda+\varepsilon_2\right).
\]
Assuming that $\lambda=e^{i\theta}$, with $\theta\in[0,\pi]$, satisfies the above equation for $(\tau_0,\tau_1)=(\tau_0^*,\tau_1^*)$, taking the absolute value of both sides of the equation leads to: 
\[\left|e^{i\theta}+\varepsilon_1\right|\left|e^{i\theta}+\varepsilon_2\right|=\varepsilon_0(\varepsilon_1+1),\]
or equivalently:
\begin{equation}\label{eq.mu}
\left[2\varepsilon_1 \cos\theta+\varepsilon_1^2+1\right]\left[2\varepsilon_2\cos\theta+\varepsilon_2^2+1\right]=\varepsilon_0^2(\varepsilon_1+1)^2.
\end{equation}

Taking into account inequalities \eqref{ineq.stab.dom}, we deduce that $\varepsilon_1<1$. 

On the one hand, if $\varepsilon_1>0$, based on Remark \ref{rem.ineg}, the following inequalities hold for the left hand side of equation \eqref{eq.mu}:
\begin{align*}
\left[2\varepsilon_1\cos\theta+\varepsilon_1^2+1\right]\left[2\varepsilon_1\cos\theta+\varepsilon_2^2+1\right]&\geq \left[-2\varepsilon_1+\varepsilon_1^2+1\right]\left[-2\varepsilon_2+\varepsilon_2^2+1\right]
\\& =(1-\varepsilon_1)^2\left(1-\varepsilon_2\right)^2
\\&>\varepsilon_0^2(\varepsilon_1+1)^2~,
\end{align*}
which contradicts equality \eqref{eq.mu}. 

On the other hand, if $\varepsilon_1\leq 0$, denoting by $P(\cos\theta)$ the left hand side of equation \eqref{eq.mu}, it follows that $P$ is a concave quadratic polynomial, and hence, using similar arguments as in the previous computations, we obtain:
\[P(\cos\theta)\geq \min\{P(-1),P(1)\}>\varepsilon_0^2(\varepsilon_1+1)^2,\]
which again, contradicts equality \eqref{eq.mu}.

Consequently, if the assumptions of the theorem hold, the equilibrium point $E_+$ is asymptotically stable for any time delays $\tau_0$ and $\tau_1$.
\end{proof}

\begin{thm}\label{thm.stab.equal.delays}
Assume that $\tau_0+\tau_1=\tau_2$. If the assumptions $(A.1)$ and $(A.2)$ hold and inequalities \eqref{ineq.stab.dom} are satisfied, the equilibrium point $E_+$ is asymptotically stable.
\end{thm}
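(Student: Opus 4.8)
The plan is to reuse the contradiction-plus-continuity scheme from the proof of Theorem \ref{thm.stab.delays.tau2.0}. By Theorem \ref{thm.stab.no.delays}, under $(A.1)$, $(A.2)$ and \eqref{ineq.stab.dom} the equilibrium $E_+$ is asymptotically stable when all delays vanish. Writing $\tau:=\tau_0+\tau_1=\tau_2$, the reduced characteristic equation \eqref{ec.char.reduced} depends only on the single (integer) parameter $\tau$; I would treat $\tau$ as a continuous parameter and argue that if stability were lost, then by continuous dependence of the roots on $\tau$ there would exist a critical value for which \eqref{ec.char.reduced} admits a root $\lambda=e^{i\theta}$, $\theta\in[0,\pi]$, on the unit circle. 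The whole point is then to rule this out.

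The decisive algebraic manoeuvre is different from the case $\tau_2=0$. Multiplying \eqref{ec.char.reduced} by $\lambda^{\tau}$ collapses the two delayed terms into
\[
(\lambda+\varepsilon_1)\left(\lambda^{\tau+1}+\varepsilon_2\right)=\varepsilon_0(\varepsilon_1+1).
\]
Since $|\varepsilon_1|<1$ (Remark \ref{rem.ineg} gives $\varepsilon_1<1$, and $\varepsilon_1>-1$ from \eqref{eq.beta}), we have $\lambda+\varepsilon_1\neq0$ on the unit circle, so I would solve for $\lambda^{\tau+1}$ and invoke $|\lambda^{\tau+1}|=1$ to eliminate $\tau$ completely. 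This yields the single modulus condition
\[
\left|\varepsilon_0(\varepsilon_1+1)-\varepsilon_2(\lambda+\varepsilon_1)\right|=|\lambda+\varepsilon_1|.
\]
Setting $K:=\varepsilon_0(\varepsilon_1+1)-\varepsilon_1\varepsilon_2$ and squaring both sides turns this into an \emph{affine} equation in $\cos\theta$, namely $h(\cos\theta)=0$ with $h(c)=(K^2+\varepsilon_2^2-\varepsilon_1^2-1)-2c(K\varepsilon_2+\varepsilon_1)$.

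Because $h$ is affine in $c=\cos\theta$, it has no zero on $[-1,1]$ as soon as $h(1)$ and $h(-1)$ have the same sign, and I expect both to be strictly negative. A short computation gives $h(1)=(\varepsilon_1+1)^2\big[(\varepsilon_0-\varepsilon_2)^2-1\big]$ and $h(-1)=(K+\varepsilon_2)^2-(1-\varepsilon_1)^2$. For $h(1)$ I would use the structural inequality $\varepsilon_0<1+\varepsilon_2$ (equivalently $2+(n-1)\delta-n\delta^2>0$, which holds for $\delta\in(0,1)$ and is guaranteed by the positivity of $E_+$) together with $\varepsilon_2<1$ to obtain $|\varepsilon_0-\varepsilon_2|<1$, hence $h(1)<0$. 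For $h(-1)$, noting that $K+\varepsilon_2=\varepsilon_0(\varepsilon_1+1)+\varepsilon_2(1-\varepsilon_1)>0$, the inequality $h(-1)<0$ reduces exactly to $\varepsilon_0(\varepsilon_1+1)<(1-\varepsilon_1)(1-\varepsilon_2)$, which is precisely Remark \ref{rem.ineg}. Thus $h(c)<0$ throughout $[-1,1]$, contradicting $h(\cos\theta)=0$, so no root can cross the unit circle and $E_+$ stays asymptotically stable.

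The main obstacle, and the place where this argument must depart from that of Theorem \ref{thm.stab.delays.tau2.0}, is the choice of necessary condition. Taking the modulus of the entire product $(\lambda+\varepsilon_1)(\lambda+\varepsilon_2\lambda^{-\tau_2})$ leaves two trigonometric factors with the \emph{different} arguments $\theta$ and $(\tau+1)\theta$, so the single-variable quadratic-in-$\cos\theta$ estimate that closes the case $\tau_2=0$ is no longer available, and a crude factor-by-factor modulus bound is too lossy (in particular when $\varepsilon_1\le0$). Isolating $\lambda^{\tau+1}$ and using $|\lambda^{\tau+1}|=1$ is exactly what removes $\tau$ and reduces the problem to the two endpoint inequalities; recognizing that $h(-1)<0$ is nothing but Remark \ref{rem.ineg} is the crux of the estimate.
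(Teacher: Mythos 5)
Your proposal is correct, and it takes a genuinely different route from the paper at the decisive step, even though both share the same outer frame: a contradiction argument against a root $\lambda=e^{i\theta}$ of $(\lambda+\varepsilon_1)(\lambda^{\tau+1}+\varepsilon_2)=\varepsilon_0(\varepsilon_1+1)$, together with the continuity-in-$\tau$ heuristic. The paper works in polar form: it writes $\lambda+\varepsilon_1=\rho_1e^{i\phi_1}$ and $\lambda^{\tau+1}+\varepsilon_2=\rho_2e^{i\phi_2}$, uses the argument identity $\phi_1+\phi_2=2\pi$ (hence $\cos\phi_2=\cos\phi_1$) to eliminate $\theta$ and $\tau$ simultaneously, solves two quadratics to obtain $\rho_k=\varepsilon_k\mu+\sqrt{\varepsilon_k^2\mu^2+1-\varepsilon_k^2}$ with $\mu=\cos\phi_1$, and then needs a monotonicity argument for the product $\rho_1\rho_2$ in order to reduce to the endpoints $\mu=\pm1$, where this product equals $(1\mp\varepsilon_1)(1\mp\varepsilon_2)$. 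You instead eliminate $\tau$ by isolating $\lambda^{\tau+1}$ and imposing only $|\lambda^{\tau+1}|=1$, which makes the unit-circle condition affine in $\cos\theta$; your endpoint identities check out: with $K=\varepsilon_0(\varepsilon_1+1)-\varepsilon_1\varepsilon_2$, one has $h(1)=(1+\varepsilon_1)^2\left[(\varepsilon_0-\varepsilon_2)^2-1\right]<0$ because $\varepsilon_2<1$ and $\varepsilon_0<1+\varepsilon_2$ (true for all $\delta\in(0,1)$), while $h(-1)=(K+\varepsilon_2)^2-(1-\varepsilon_1)^2<0$ is, after using $K+\varepsilon_2=\varepsilon_0(\varepsilon_1+1)+\varepsilon_2(1-\varepsilon_1)>0$, literally Remark \ref{rem.ineg}. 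In fact the two proofs meet at the endpoints: your $h(-1)<0$ is the paper's $(1-\varepsilon_1)(1-\varepsilon_2)>\varepsilon_0(\varepsilon_1+1)$, and your $h(1)<0$ is its $(1+\varepsilon_1)(1+\varepsilon_2)>\varepsilon_0(\varepsilon_1+1)$ --- an inequality the paper also needs but does not spell out, since Remark \ref{rem.ineg} only supplies the first one. What your version buys: affine dependence on $\cos\theta$ replaces the paper's monotonicity claim for a product of square-root expressions (whose verification is not entirely trivial), there is no case split on the sign of $\varepsilon_1$ and no polar bookkeeping, and the real crossings $\lambda=\pm1$ are covered uniformly, whereas the paper's parametrization assumes $\theta\in(0,\pi)$. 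A further bonus you could claim: your identity shows $\left|\varepsilon_0(\varepsilon_1+1)/(\lambda+\varepsilon_1)-\varepsilon_2\right|<1$ everywhere on the unit circle, so the maximum-modulus principle applied to this function on $|\lambda|\ge 1$ excludes roots of modulus at least one for every integer $\tau$ at once, which would remove the continuity-in-integer-$\tau$ step on which both you and the paper lean.
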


\begin{proof}
If $\tau_0+\tau_1=\tau_2:=\tau$, the characteristic equation may be written as:
\[
\varepsilon_0(\varepsilon_1+1)=(\lambda+\varepsilon_1)\left(\lambda^{\tau+1}+\varepsilon_2\right).\]
Let us assume that there exists $\tau\geq 0$ such that this characteristic equation has a root $\lambda=e^{i\theta}$, with $\theta\in(0,\pi)$.

Let us consider $\rho_1>0$, $\rho_2>0$ and $\phi_1,\phi_2\in(0,2\pi)$ such that:
\[
\begin{cases}
\lambda+\varepsilon_1=\rho_1 e^{i\phi_1}\\
\lambda^{\tau+1}+\varepsilon_2=\rho_2 e^{i\phi_2}
\end{cases}\]
Hence, the characteristic equation now implies:
\[
\varepsilon_0(\varepsilon_1+1)=\rho_1\rho_2e^{i(\phi_1+\phi_2)}.
\]
and therefore:
\[
\begin{cases}
\rho_1\rho_2=\varepsilon_0(\varepsilon_1+1)\\
\phi_1+\phi_2=2\pi
\end{cases}\]
 we obtain:
\[
\begin{cases}
\cos\theta+\varepsilon_1=\rho_1\cos\phi_1\\
    \sin\theta=\rho_1\sin\phi_1\\
    \cos(\tau+1)\theta+\varepsilon_2=\rho_2\cos\phi_2\\
\sin(\tau+1)\theta=\rho_2\sin\phi_2
\end{cases}
\]
From the second equation, we deduce $\phi_1\in(0,\pi)$, and hence, $\phi_2=2\pi-\phi_1$. Moreover, eliminating $\theta$ from the previous system, we get:
\[\begin{cases}
\rho_1\rho_2=\varepsilon_0(\varepsilon_1+1)\\
    \rho_1^2-2\varepsilon_1\rho_1\cos\phi_1+\varepsilon_1^2-1=0\\
     \rho_2^2-2\varepsilon_2\rho_2\cos\phi_1+\varepsilon_2^2-1=0
\end{cases}
\]
Solving the last two quadratic equations and keeping in mind that $\rho_1>0$ and $\rho_2>0$ we have: 
\[\rho_k=\varepsilon_k\cos\phi_1+\sqrt{\varepsilon_k^2\cos^2\phi_1+1-\varepsilon_k^2},\quad\text{for }k\in\{1,2\}.\]
Denoting $\mu=\cos\phi_1\in[-1,1]$ and replacing in the first equation of the above system, we obtain: 
\[h(\mu):=\left[ \varepsilon_1\mu+\sqrt{\varepsilon_1^2\mu^2+1-\varepsilon_1^2} \right]\cdot\left[ \varepsilon_2\mu+\sqrt{\varepsilon_2^2\mu^2+1-\varepsilon_2^2} \right]=\varepsilon_0(\varepsilon_1+1).\]
It is easy to check that the function $h$ is monotonous (strictly increasing if $\varepsilon_1+\varepsilon_2>0$ and strictly decreasing otherwise), and hence:
\begin{align*}
h(\mu)&\geq \min\{h(-1),h(1)\}=\min\{(1-\varepsilon_1)(1-\varepsilon_2), (1+\varepsilon_1)(1+\varepsilon_2)\}\\
&=
\min\left\{(1-\varepsilon_1)\left(1-\varepsilon_2\right),(1+\varepsilon_1) \left(1+\varepsilon_2\right)\right\}>\varepsilon_0(\varepsilon_1+1),
\end{align*}
where Remark \ref{rem.ineg} has been employed. 
Therefore, we have arrived at a contradiction, and hence, if inequalities \eqref{ineq.stab.dom} hold, the positive equilibrium is asymptotically stable. However, for certain values of the time delays, the exact stability region may be larger than the delay-independent stability region \eqref{t0+t1=impar}. 
\end{proof}

\begin{remark}If either $\tau_2=0$ or $\tau_0+\tau_1=\tau_2$, Theorems 
 \ref{thm.stab.delays.tau2.0} and \ref{thm.stab.equal.delays} reveal that time delays may have a stabilizing effect on $E_+$. In these two cases, the delay-free stability regions provided by inequalities \eqref{ineq.stab.dom} are in fact, delay-independent stability regions of $E_+$. These regions have been exemplified in Figure \ref{fig.stab.regions}, for $a_0=2$ and $a_1=2.5$. We observe that for a larger number $n$ of private firms, smaller values of $\delta$ are needed for the stability of $E_+$, while slightly larger values of $\alpha$ are permissible.  
\end{remark}

\begin{figure}[htbp]
    \centering    \includegraphics[width=0.53\linewidth]{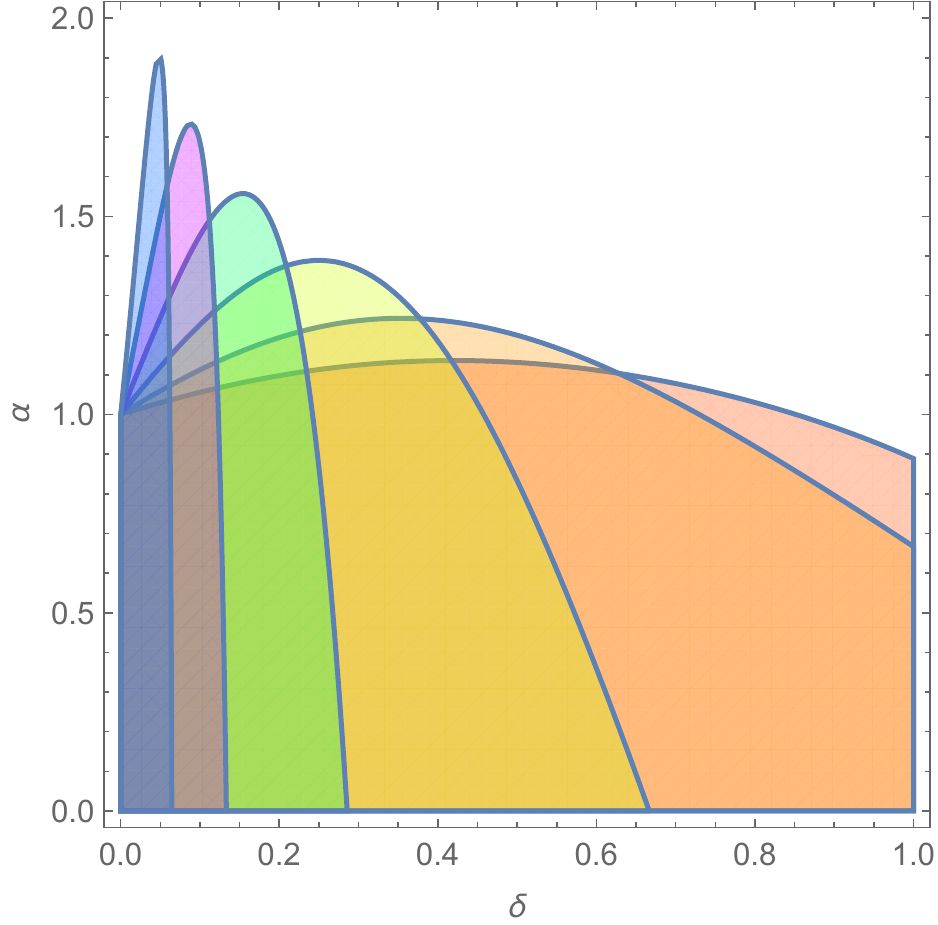}
    \caption{Stability regions (independent of time delays $(\tau_0,\tau_1,\tau_2)$ such that either $\tau_2=0$ or $\tau_0+\tau_1=\tau_2$) of the positive equilibrium point $E_+$ of system \eqref{system}
    in the $(\delta,\alpha)$ parameter plane, for different value of $n=2^k, k=\{0, 1, 2, 3, 4, 5\}$ (colored orange to blue). Here, $a_0=2$ and $a_1=2.5$.}
\label{fig.stab.regions}
\end{figure}
\begin{proposition}
A flip bifurcation takes place in a neighborhood of the equilibrium point $E_+$ if and only if:

\begin{equation}\label{even.odd}
    \varepsilon_1=\dfrac{1-\varepsilon_2(-1)^{\tau_2}-\varepsilon_0(-1)^{\tau_0+\tau_1}}{1-\varepsilon_2(-1)^{\tau_2}+\varepsilon_0(-1)^{\tau_0+\tau_1}}.
\end{equation}
\end{proposition}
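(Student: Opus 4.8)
The plan is to characterize the flip (period-doubling) bifurcation by the appearance of the eigenvalue $\lambda=-1$ on the unit circle, and then to show that condition \eqref{even.odd} is nothing but the requirement that $\lambda=-1$ solve the reduced characteristic equation \eqref{ec.char.reduced}. Recall that a flip bifurcation of $E_+$ occurs precisely when a simple real root of the characteristic equation crosses the unit circle through $\lambda=-1$, the transversality condition holds, and all remaining roots stay inside the open unit disk. So the first conceptual step is to reduce the whole ``if and only if'' to the statement ``$\lambda=-1$ is a root of \eqref{ec.char.reduced}''.

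Before substituting, I would dispose of the spurious factor in \eqref{ec.char.Eplus}. The roots generated by $\left(\lambda-\tfrac{\delta}{2}\lambda^{-\tau_2}\right)^{n-1}$ satisfy $\lambda^{\tau_2+1}=\delta/2\in(0,1)$, hence lie strictly inside the unit disk and can never equal $-1$. Consequently any crossing through $\lambda=-1$ must be produced by the reduced equation \eqref{ec.char.reduced}, and it suffices to impose that $\lambda=-1$ be a root of \eqref{ec.char.reduced}.

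Next I would substitute $\lambda=-1$ into \eqref{ec.char.reduced}. Using the elementary identities $(-1)^{-\tau_2}=(-1)^{\tau_2}$ and $(-1)^{-(\tau_0+\tau_1)}=(-1)^{\tau_0+\tau_1}$, the factor $\lambda+\varepsilon_2\lambda^{-\tau_2}$ becomes $-1+\varepsilon_2(-1)^{\tau_2}$, the factor $\lambda+\varepsilon_1$ becomes $\varepsilon_1-1$, and the delay monomial $\lambda^{-\tau_0-\tau_1}$ becomes $(-1)^{\tau_0+\tau_1}$. The resulting relation is affine in $\varepsilon_1$; collecting the coefficient of $\varepsilon_1$ and the constant term yields
\[
\varepsilon_1\bigl(1-\varepsilon_2(-1)^{\tau_2}+\varepsilon_0(-1)^{\tau_0+\tau_1}\bigr)=1-\varepsilon_2(-1)^{\tau_2}-\varepsilon_0(-1)^{\tau_0+\tau_1},
\]
which, after dividing by the nonvanishing bracket, is exactly \eqref{even.odd}. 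Every step in this computation is reversible, which delivers both implications of the equivalence simultaneously.

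Finally, to promote ``$\lambda=-1$ is a root'' into a genuine flip bifurcation, I would verify the nondegeneracy and transversality conditions: that $\lambda=-1$ is a simple root of \eqref{ec.char.reduced}, and that as the bifurcation parameter (for instance $\alpha$, which enters only through $\varepsilon_1$) crosses the critical value determined by \eqref{even.odd}, the root leaves the unit circle with nonzero speed, i.e. $\tfrac{d\lambda}{d\varepsilon_1}\neq 0$ at $\lambda=-1$. This is obtained by differentiating \eqref{ec.char.reduced} implicitly with respect to the parameter and evaluating at $\lambda=-1$. I expect the main obstacle to be exactly this transversality check rather than the substitution: one must confirm that the implicit derivative does not vanish on the critical surface, so that the crossing is genuinely transverse; the algebraic identity \eqref{even.odd} itself follows from a purely routine evaluation.
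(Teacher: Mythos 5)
Your proposal is correct and follows exactly the argument the paper implicitly relies on: the paper states this proposition without any written proof, and the intended justification is precisely your computation --- discarding the factor $\left(\lambda-\frac{\delta}{2}\lambda^{-\tau_2}\right)^{n-1}$, whose roots satisfy $\lambda^{\tau_2+1}=\delta/2<1$ and so can never reach $-1$, then substituting $\lambda=-1$ into \eqref{ec.char.reduced} and solving the resulting affine equation for $\varepsilon_1$, which yields \eqref{even.odd} and, upon examining the parities of $\tau_0+\tau_1$ and $\tau_2$, the four cases of Remark \ref{rem.case}. Your closing observation that a complete proof of an actual flip bifurcation also requires simplicity of the root $\lambda=-1$ and a transversality check (e.g.\ with respect to $\alpha$, which enters only through $\varepsilon_1$) goes beyond the paper, which omits these nondegeneracy conditions altogether.
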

\begin{remark}\label{rem.case}
We distinguish four cases presented below:
\begin{enumerate}
    \item[(i)] If $\tau_0+\tau_1$ and  $\tau_2$ are \textbf{even}, a flip bifurcation takes place in a neighborhood of the equilibrium point $E_+$ exactly at the boundary of the delay-free stability region given by inequalities \eqref{ineq.stab.dom}, i.e. when  \[\varepsilon_1=\dfrac{1-\varepsilon_2-\varepsilon_0}{1-\varepsilon_2+\varepsilon_0}~.\]
     \item[(ii)] If $\tau_0+\tau_1$ and  $\tau_2$ are \textbf{odd}, a flip bifurcation takes place in a neighborhood of the equilibrium point $E_+$ if and only if  \[\varepsilon_1=\dfrac{1+\varepsilon_2+\varepsilon_0}{1+\varepsilon_2-\varepsilon_0}~.\]
\item[(iii)] If $\tau_0+\tau_1$ is \textbf{even} and $\tau_2$ is \textbf{odd} a flip bifurcation takes place in a neighborhood of the equilibrium point $E_+$ if and only if  \[\varepsilon_1=\dfrac{1+\varepsilon_2-\varepsilon_0}{1+\varepsilon_2+\varepsilon_0}~.\]
\item[(iv)]  If $\tau_0+\tau_1$ is \textbf{odd} and $\tau_2$ is \textbf{even}, a flip bifurcation takes place in a neighborhood of the equilibrium point $E_+$ if and only if  \[\varepsilon_1=\dfrac{1-\varepsilon_2+\varepsilon_0}{1-\varepsilon_2-\varepsilon_0}~.\]
\end{enumerate}
\end{remark}
   
\begin{remark}
 To study the Neimark-Sacker bifurcation from the equation \eqref{ec.char.reduced} for $\lambda=e^{i\theta}$ and $\tau=\tau_0+\tau_1$, we have that \begin{equation}\label{ec.epsilon1.NS}
   \varepsilon_1=\dfrac{e^{i\theta} H(\theta, \tau, \tau_2, \varepsilon_2)-\varepsilon_0}{\varepsilon_0 -H(\theta, \tau, \tau_2, \varepsilon_2)}~,  
 \end{equation}where $H(\theta, \tau, \tau_2, \varepsilon_2)=e^{i\tau\theta}(e^{i\theta}+\varepsilon_2e^{-i\tau_2\theta})$ and if we take the real and imaginary part we obtain the following system:   
\[\begin{cases}
\Im(H)=\sin(\tau+1)\theta+\varepsilon_2\sin(\tau-\tau_2)\theta\\
\Re(H)=\cos(\tau+1)\theta+\varepsilon_2\cos(\tau-\tau_2)\theta
\end{cases}
\]
As $\varepsilon_1\in\mathbb{R}$, taking the imaginary part in equation \eqref{ec.epsilon1.NS} leads to:
\begin{equation}\label{ec.imaginar.NS}
    \varepsilon_0\cos\left(\tau+\frac{3}{2}\right)\theta-\varepsilon_0\varepsilon_2\cos\left(\tau-\tau_2+\frac{1}{2}\right)\theta=\cos\frac{\theta}{2}\left[1+\varepsilon_2^2+2\varepsilon_2\cos(\tau_2+1)\theta\right]
\end{equation}
and taking the real part in equation \eqref{ec.epsilon1.NS} gives:
$$\varepsilon_1=\dfrac{2\cos\frac{\theta}{2}[\varepsilon_0\cos\left(\tau+\frac{3}{2}\right)\theta+\varepsilon_0\varepsilon_2\cos\left(\tau-\tau_2+\frac{1}{2}\right)\theta]-\cos\theta (1+\varepsilon_2^2+2\varepsilon_2\cos(\tau_2+1)\theta)-\varepsilon_2^2}{\varepsilon_0^2+\varepsilon_2^2+2\varepsilon_2\cos(\tau_2+1)\theta-2\varepsilon_0\cos(\tau+1)\theta-2\varepsilon_0\varepsilon_2\cos(\tau-\tau_2)\theta+1}$$

In particular case, $\tau_2=\tau$ we obtain:
$$\varepsilon_1=\dfrac{2\cos\frac{\theta}{2}[\varepsilon_0\cos\left(\tau+\frac{3}{2}\right)\theta+\varepsilon_0\varepsilon_2\cos\frac{1}{2}\theta]-\cos\theta (1+\varepsilon_2^2+2\varepsilon_2\cos(\tau+1)\theta)-\varepsilon_2^2}{\varepsilon_0^2+\varepsilon_2^2+2\varepsilon_2\cos(\tau+1)\theta-2\varepsilon_0\cos(\tau+1)\theta-2\varepsilon_0\varepsilon_2+1}$$
and if $\tau_2=0$ it is obtain the equation:
$$\varepsilon_1=\dfrac{2\cos\frac{\theta}{2}[\varepsilon_0\cos\left(\tau+\frac{3}{2}\right)\theta+\varepsilon_0\varepsilon_2\cos\left(\tau+\frac{1}{2}\right)\theta]-\cos\theta (1+\varepsilon_2^2+2\varepsilon_2\cos(\tau+1)\theta)-\varepsilon_2^2}{\varepsilon_0^2+\varepsilon_2^2+2\varepsilon_2\cos\theta-2\varepsilon_0\cos(\tau+1)\theta-2\varepsilon_0\varepsilon_2\cos\tau\theta+1}$$
 \end{remark} 


\begin{figure}[htbp]
    \centering
    \includegraphics[width=1\linewidth]{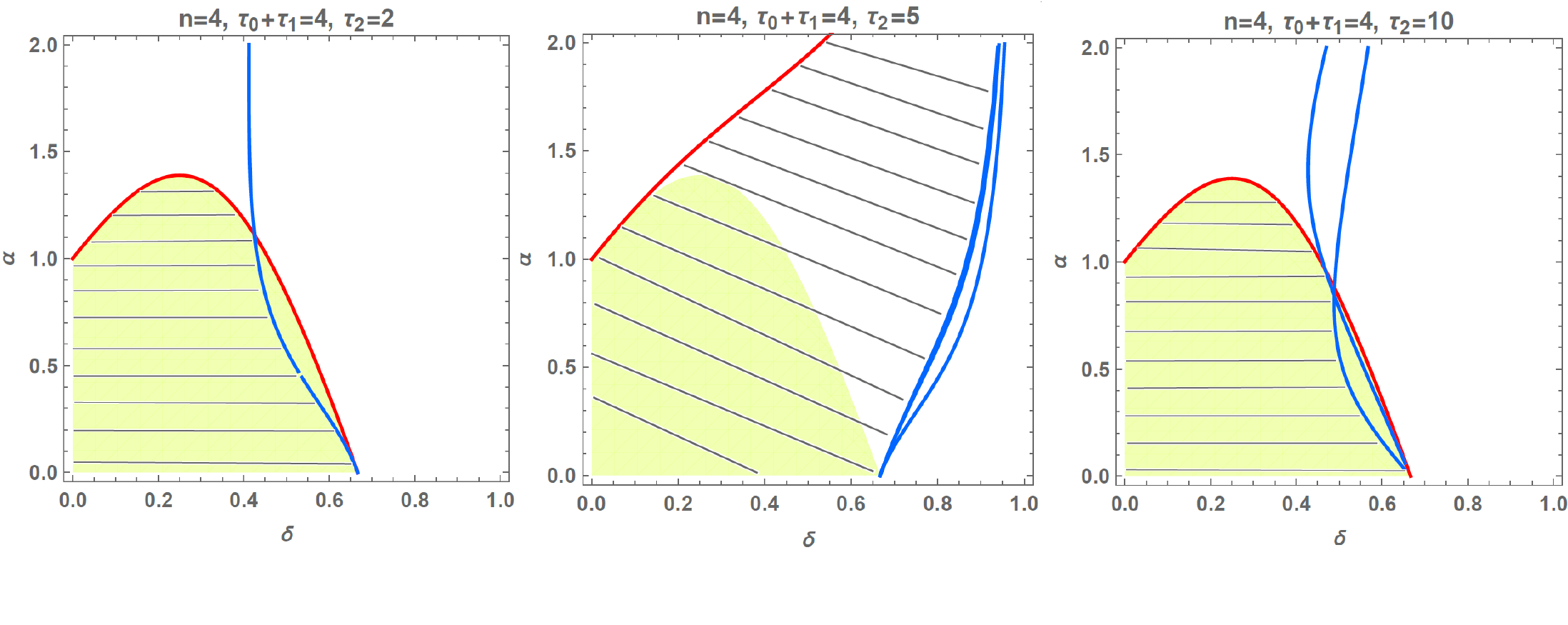}
    \caption{Stability region of the positive equilibrium point $E_+$ of system \eqref{system} for flip and Neimark-Sacker bifurcation with $n=4$ private firms and one public firm, with respect to $\alpha$. Time delays: $\tau_0+\tau_1$ - even and $\tau_2$ with different values.}
    \label{t0+t1=par}
\end{figure}
\begin{figure}[htbp]
    \centering
    \includegraphics[width=1\linewidth]{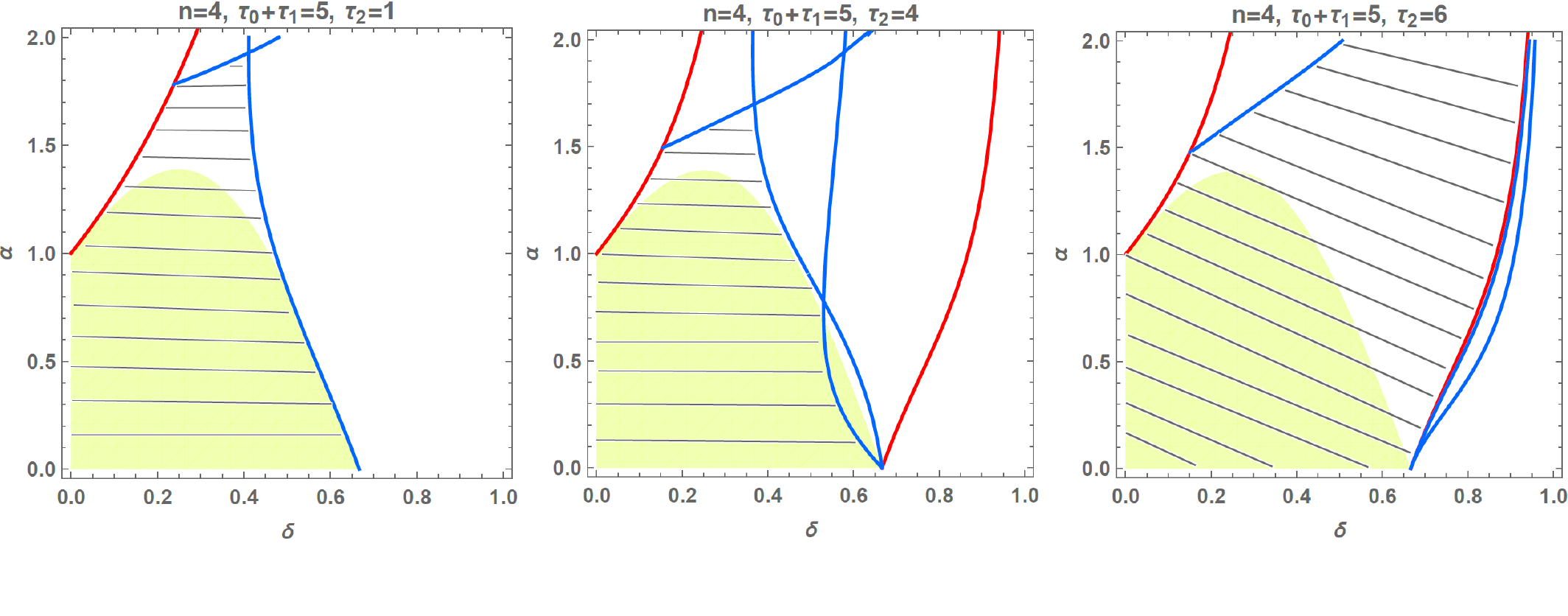}
    \caption{Stability region of the positive equilibrium point $E_+$ of system \eqref{system} for flip and Neimark-Sacker bifurcation with $n=4$ private firms and one public firm, with respect to $\alpha$. Time delays: $\tau_0+\tau_1=5$ and $\tau_2$ with different values.}
    \label{t0+t1=impar}
\end{figure}

\begin{figure}[htbp]
    \centering
    \includegraphics[width=1\linewidth]{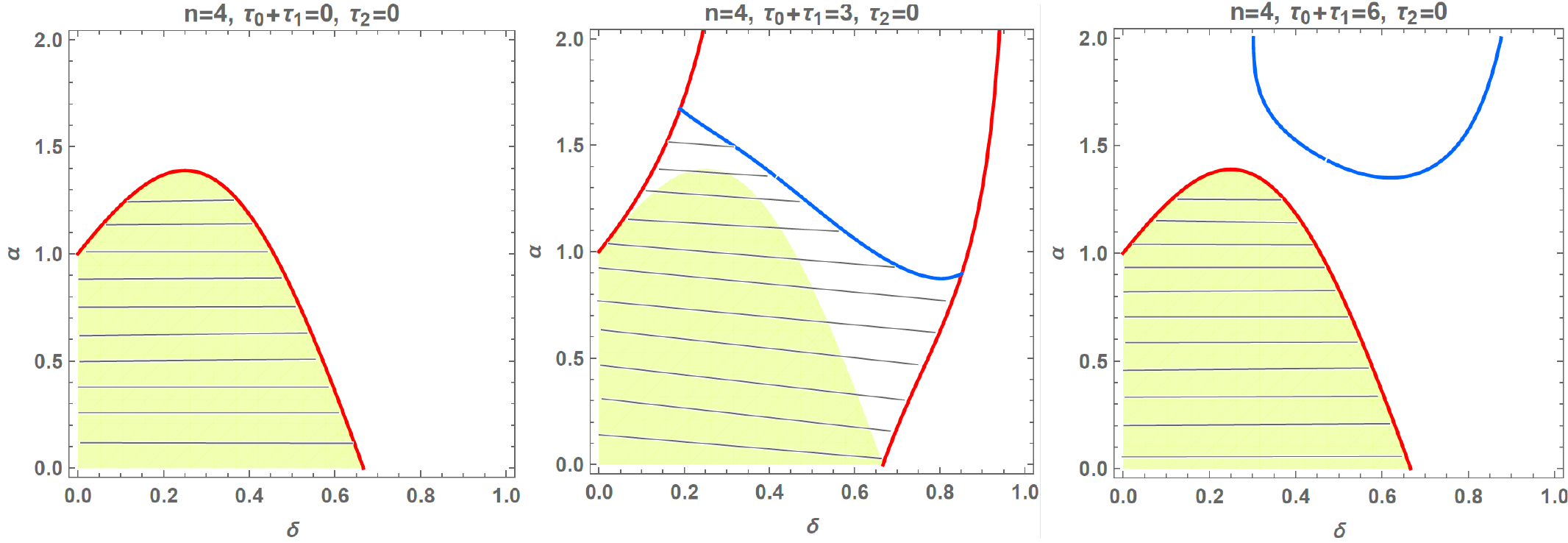}
    \caption{Stability region of the positive equilibrium point $E_+$ of system \eqref{system} for flip and Neimark-Sacker bifurcation with $n=4$ private firms and one public firm, with respect to $\alpha$. Time delays: $\tau_0+\tau_1$ with different values and $\tau_2=0$.}
    \label{t2=0}
\end{figure}
\begin{figure}[htbp]
    \centering
    \includegraphics[width=1\linewidth]{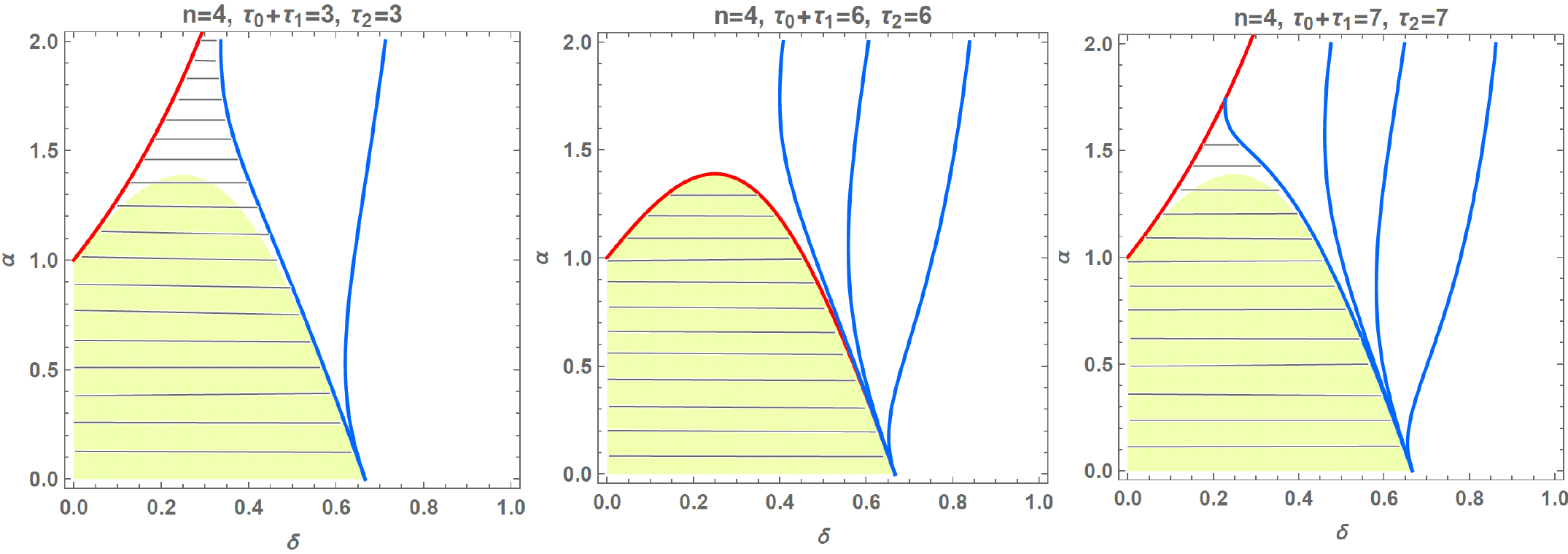}
    \caption{Stability region of the positive equilibrium point $E_+$ of system \eqref{system} for flip and Neimark-Sacker bifurcation with $n=4$ private firms and one public firm, with respect to $\alpha$. Time delays: $\tau_0+\tau_1=\tau_2$.}
    \label{tauegale}
\end{figure}

\subsection{Stability analysis in the absence of the public firm}
In the absence of public firm, $q_0(t)=0$, the nonlinear discrete-time mathematical model with delay \eqref{system} is reduced to: \begin{equation}\label{system.without.public.firm}
    q_j(t+1)=\dfrac{a_1}{2b}-\dfrac{\delta}{2}\sum\limits_{i=1,i\neq j}^n q_i(t-\tau_2)\quad ,\quad j=\overline{1,n}.
\end{equation} 
where the equilibrium points are $E_0^r=(q_0^\star, q_0^\star, ..., q_0^\star)$ and $E_+^r=(q_1^\star, q_1^\star, ..., q_1^\star)$.

The characteristic equation is given as follows: 
\begin{equation}\label{ec.char.without.public.firm}
\left(\!\lambda\!-\!\dfrac{\delta}{2}\lambda^{-\tau_2}\right)^{\!n-1}\!\!\left(\!\lambda\!+\!(n\!-\!1)\dfrac{\delta}{2}\lambda^{-\tau_2}\right)\!=\!0.\end{equation}

\section{Numerical examples}
\label{sec:4}

 To showcase our theoretical results, we examine a scenario with $4$ private firms and $1$ public firm, with the following fixed parameters: $a_0=2$, $a_1=2.5$, $b=1$, and $\delta=0.4$. Under these parameter values, the positive equilibrium point is calculated to be: \[E_+=(0.9375,0.664,0.664,0.664,0.664).\]

 By referencing inequalities \eqref{ineq.stab.dom}, we conclude that the positive equilibrium $E_+$ is asymptotically stable, regardless of the chosen values of the time delays $\tau_0$, $\tau_1$, and $\tau_2$, provided that $\alpha<\alpha^\star=1.185$. From the observations in Remark \ref{rem.case}, it can be deduced that in the event where $\tau_0 + \tau_1$ and $\tau_2$ are both even, a flip bifurcation will occur in a vicinity of the positive equilibrium at the critical value of the parameter $\alpha$, denoted by $\alpha^\star$. This is consistent with the bifurcation diagrams shown in Figures \ref{2210} and \ref{248} displayed with respect to the parameter $\alpha$ for the special cases $\tau_0=\tau_1=2,\tau_2=10$ and $\tau_0=2, \tau_1=4,\tau_2=8$ respectively.
In the first case, the flip bifurcation is followed by a period-doubling bifurcation at approximately $\alpha = 1.55$. Conversely, in Figure \ref{248}, the flip bifurcation is followed by a period-doubling bifurcation at around $\alpha = 1.38$ and a Neimark-Sacker bifurcation of the period-4 point at approximately $\alpha = 1.51$.

\begin{figure}[htbp]
    \centering
    \includegraphics[width=0.75\linewidth]{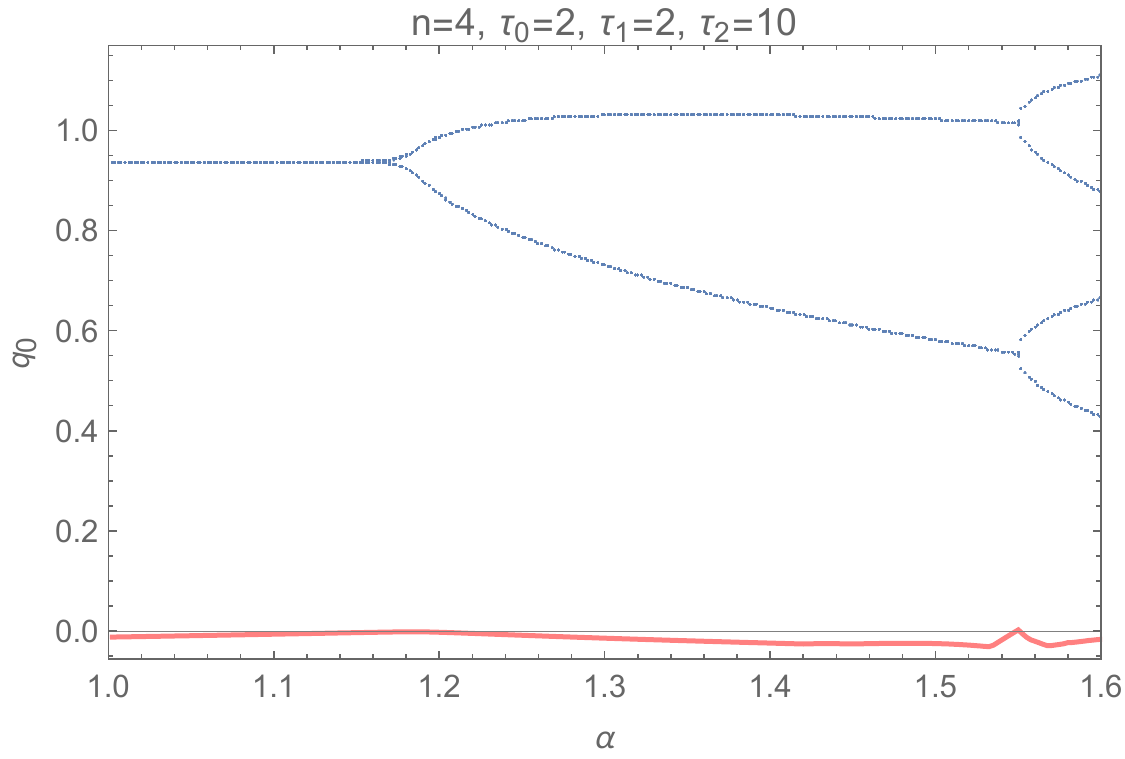} 
  \caption{Bifurcation diagram and largest Lyapunov exponent (shown in red) for system \eqref{system} with $n=4$ private firms and one public firm, with respect to $\alpha$. Fixed parameter values: $a_0=2$, $a_1=2.5$, $b=1$ and $\delta=0.4$. Time delays: $\tau_0=2$, $\tau_1=2$, $\tau_2=10$.}
  \label{2210}
\end{figure}

\begin{figure}[htbp]
    \centering
    \includegraphics[width=0.75\linewidth]{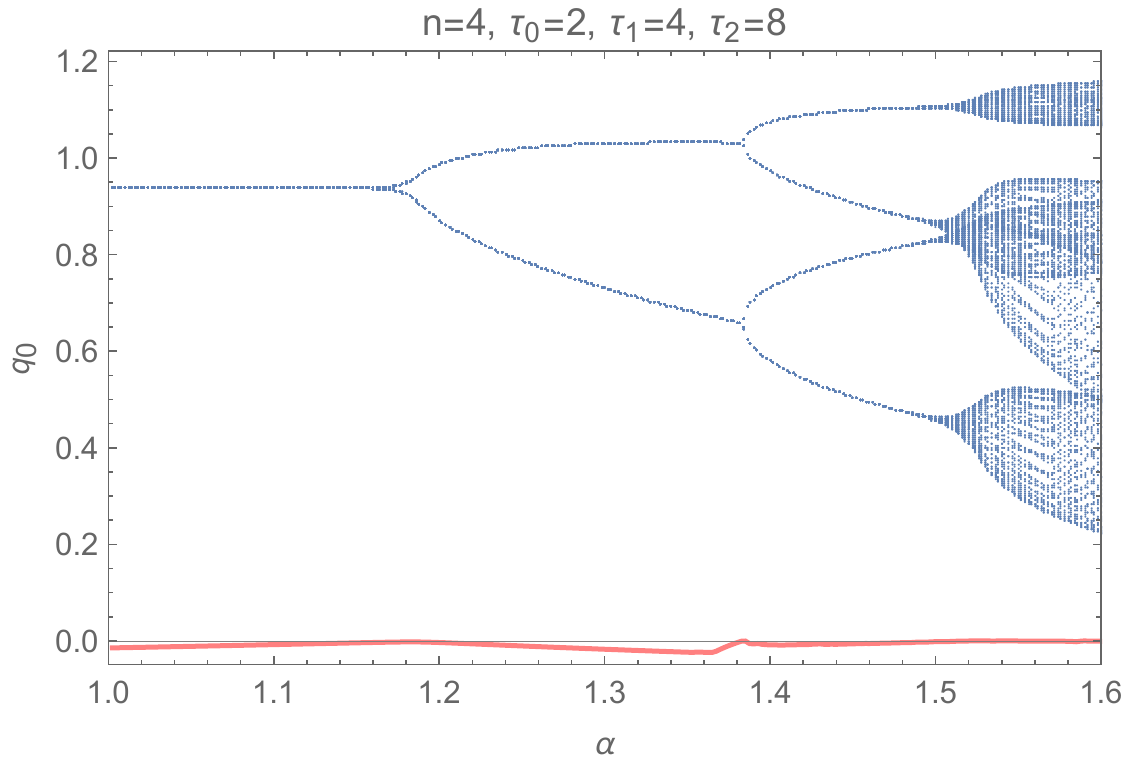}
  \caption{Bifurcation diagram and largest Lyapunov exponent (shown in red) for system \eqref{system} with $n=4$ private firms and one public firm, with respect to $\alpha$. Fixed parameter values: $a_0=2$, $a_1=2.5$, $b=1$ and $\delta=0.4$. Time delays: $\tau_0=2$, $\tau_1=4$, $\tau_2=8$.}
  
  \label{248}
\end{figure}

\begin{figure}[htbp]
    \centering
\includegraphics[width=1\linewidth]{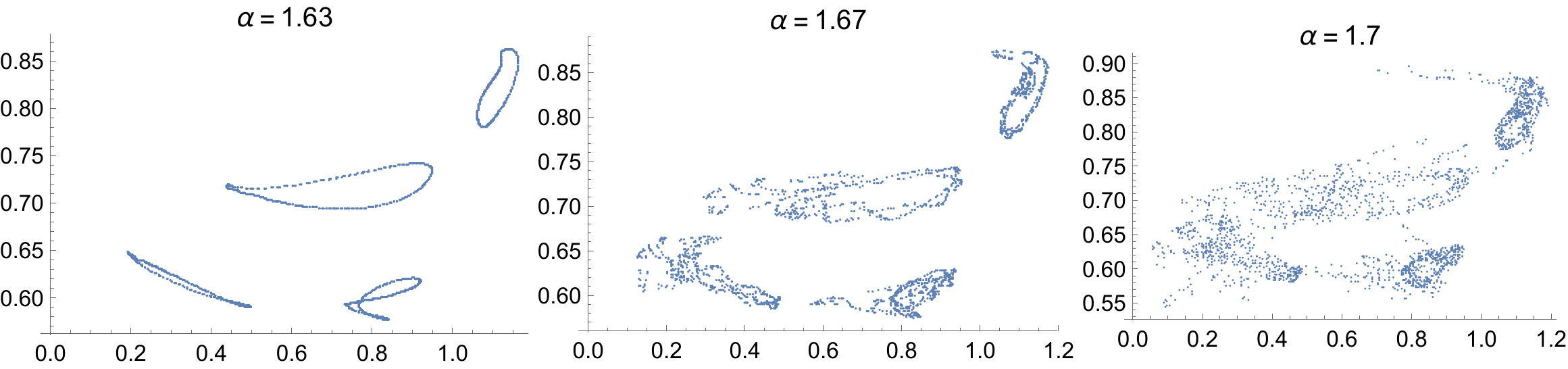}
  \caption{Phase portraits 
  for system \eqref{system} with $n=4$ private firms and one public firm, for various values of $\alpha$. Fixed parameter values: $a_0=2$, $a_1=2.5$, $b=1$ and $\delta=0.4$. Time delays: $\tau_0=2, \tau_1=4$ and $ \tau_2=8$.}
    \label{248faza}
\end{figure}

In contrast with the previous two examples, the bifurcation diagrams for the cases $\tau_0=5$, $\tau_1=3$, $\tau_2=3$ and $\tau_0=3$, $\tau_1=5$, $\tau_2=5$, displayed in Figures \ref{figbif4533} and \ref{figbif4355}, show that in these cases, the stability of the positive equilibrium $E_+$ is lost due to a Neimark-Sacker bifurcation. When $\tau_0=5$, $\tau_1=3$, $\tau_2=3$ (see Figure \ref{figbif4533}), a Neimark-Sacker bifurcation takes place at $\alpha\simeq 1.43$, and a stable limit cycle is formed. However, for $\alpha>1.6$, we observe the occurrence of a chaotic attractor. On the other hand, when $\tau_0=3$, $\tau_1=5$, $\tau_2=5$ (see Figure \ref{figbif4355}), we can only observe a Neimark-Sacker bifurcation that occurs for $\alpha\simeq 1.28$ and the resulting stable limit cycles persist for $\alpha>1.28$  (the largest Lyapunov exponent remains constantly null).

\begin{figure}[htbp]
    \centering
    \includegraphics[width=0.75\linewidth]{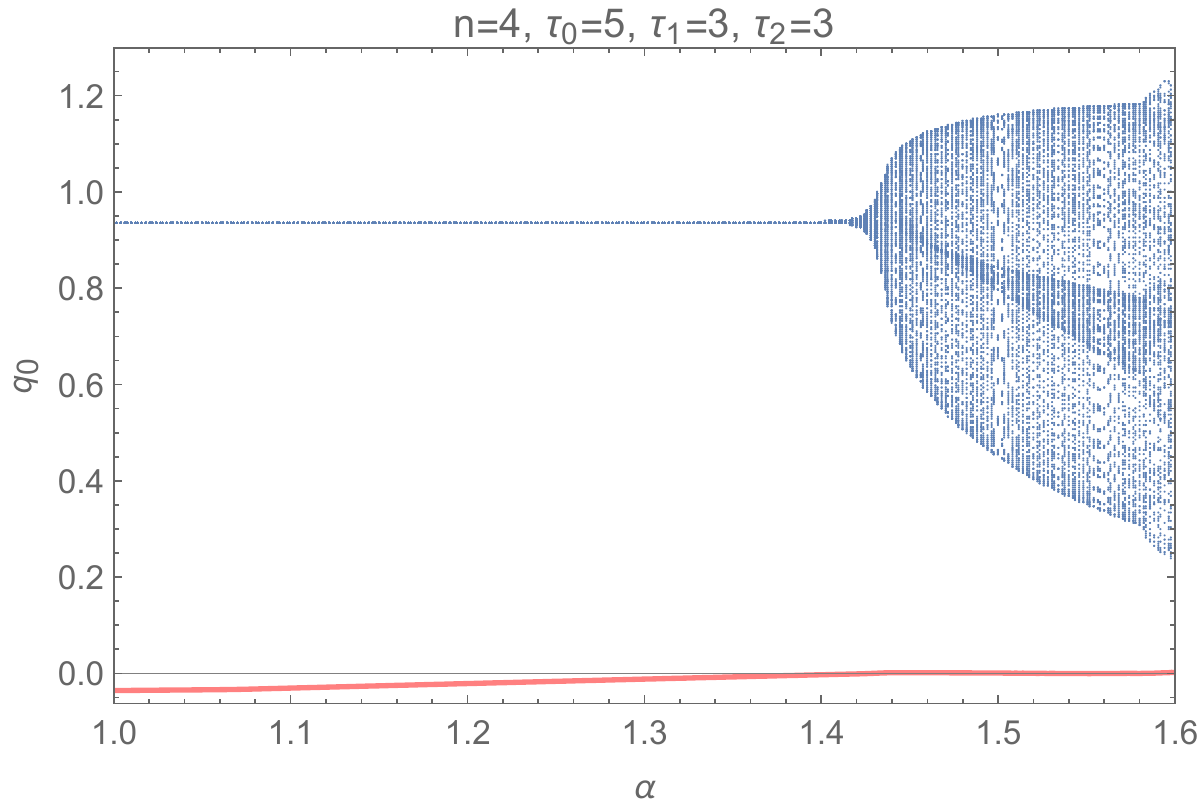}
  \caption{Bifurcation diagram and largest Lyapunov exponent (shown in red) for system \eqref{system} with $n=4$ private firms and one public firm, with respect to $\alpha$. Fixed parameter values: $a_0=2$, $a_1=2.5$, $b=1$ and $\delta=0.4$. Time delays: $\tau_0=5$, $\tau_1=3$, $\tau_2=3$.}
  \label{figbif4533}
\end{figure}

\begin{figure}[htbp]
    \centering
\includegraphics[width=1\linewidth]{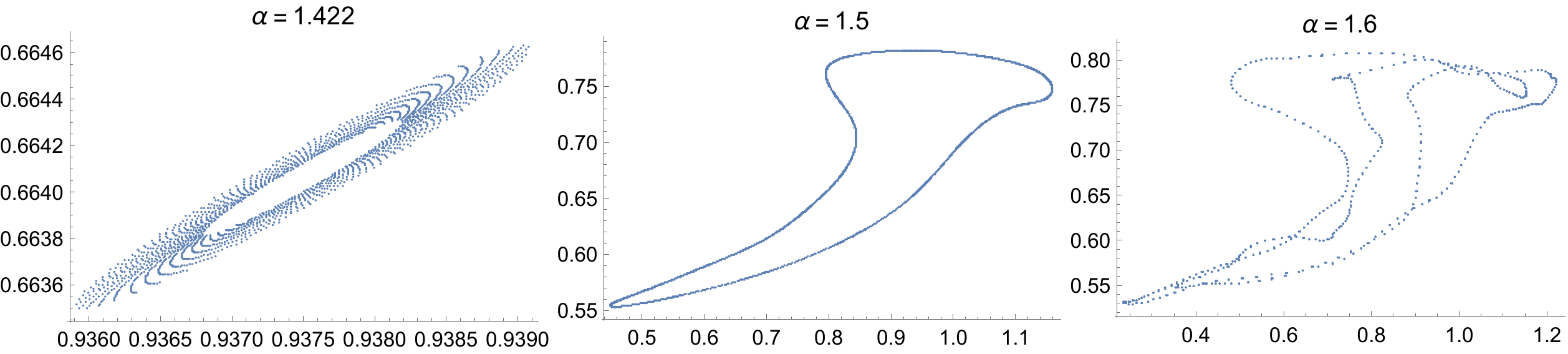}
  \caption{Phase portraits 
  for system \eqref{system} with $n=4$ private firms and one public firm, for various values of $\alpha$. Fixed parameter values: $a_0=2$, $a_1=2.5$, $b=1$ and $\delta=0.4$. Time delays: $\tau_0=5, \tau_1=3$ and $ \tau_2=3$.}
      \label{533faza}
\end{figure}

\begin{figure}[htbp]
    \centering
    \includegraphics[width=0.75\linewidth]{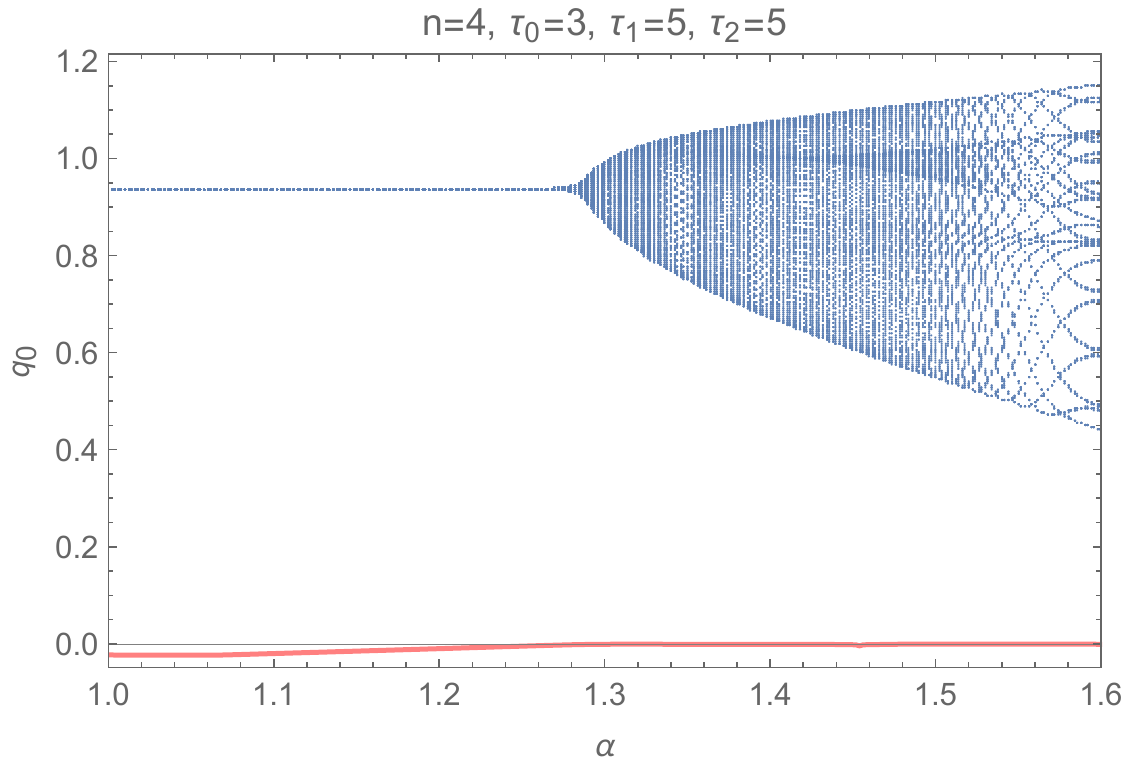}
  \caption{Bifurcation diagram and largest Lyapunov exponent (shown in red) for system \eqref{system} with $n=4$ private firms and one public firm, with respect to $\alpha$. Fixed parameter values: $a_0=2$, $a_1=2.5$, $b=1$ and $\delta=0.4$. Time delays: $\tau_0=3$, $\tau_1=5$, $\tau_2=5$.}
\label{figbif4355}  
\end{figure}

\begin{figure}[htbp]
    \centering
\includegraphics[width=1\linewidth]{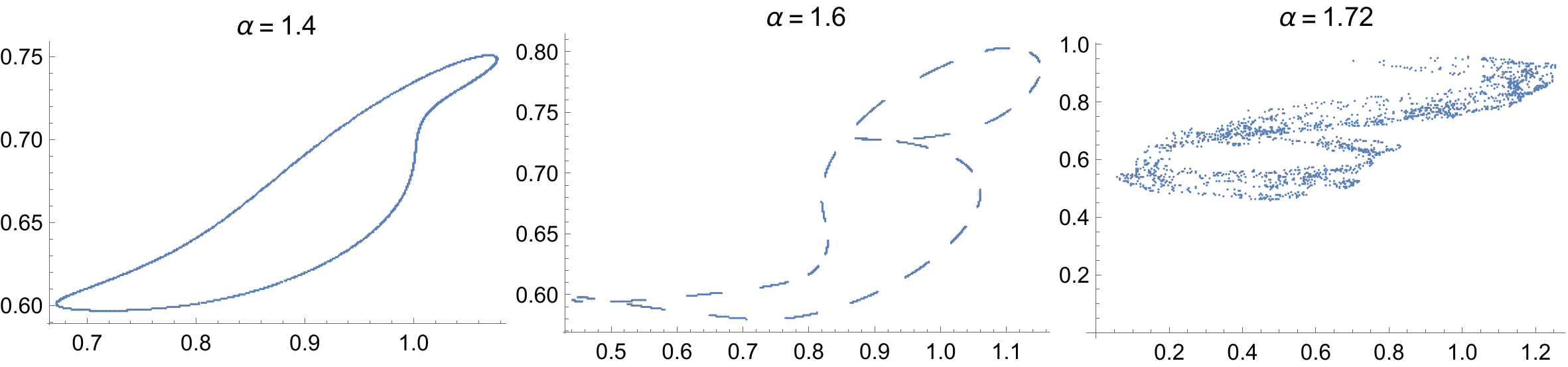}
  \caption{Phase portraits 
  for system \eqref{system} with $n=4$ private firms and one public firm, for various values of $\alpha$. Fixed parameter values: $a_0=2$, $a_1=2.5$, $b=1$ and $\delta=0.4$. Time delays: $\tau_0=3, \tau_1=5$ and $ \tau_2=5$.}
  
    \label{355faza}
\end{figure}

As a final example, as indicated by Remark \ref{rem.case}, if $\tau_0+\tau_1$ is even and $\tau_2$ is odd, the positive equilibrium $E_+$ loses its stability at $\alpha=\alpha^\star$. This is demonstrated in the bifurcation diagram from Figure \ref{975}  for the case of $\tau_0=9$, $\tau_1=7$, and $\tau_2=5$.  
The flip bifurcation at $\alpha\simeq 1.49$  is followed by a Neimark-Sacker bifurcation of the period-2 point for $\alpha\simeq 1.65$. Again, for sufficiently large values of the parameter $\alpha$, chaos arises, emphasized by the positive values of the largest Lyapunov exponent.

The phase portraits displayed in Figures \ref{248faza}, \ref{533faza}, \ref{355faza} and \ref{975faza} are consistent with the bifurcation diagrams, which illustrate the various dynamic regimes ranging from period doubling for small $\alpha$ to the appearance of chaos when $\alpha$ is sufficiently large.

\begin{figure}[htbp]
    \centering
\includegraphics[width=0.75\linewidth]{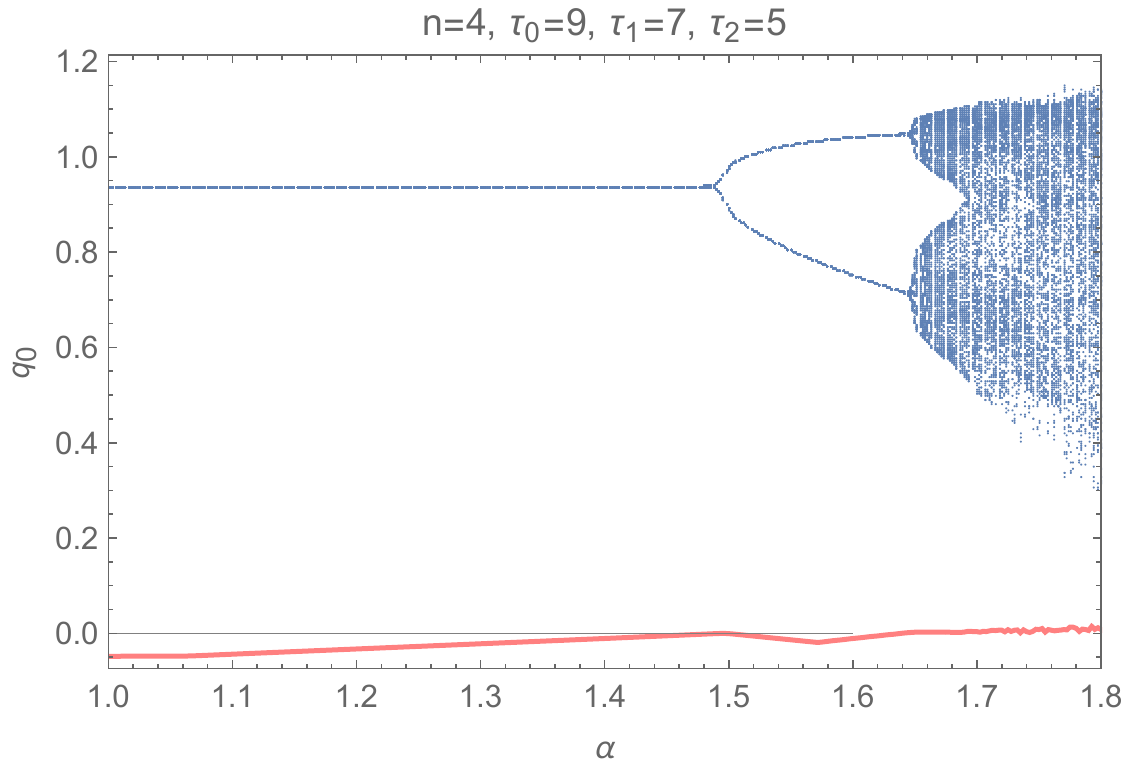}
  \caption{Bifurcation diagram and largest Lyapunov exponent (shown in red) for system \eqref{system} with $n=4$ private firms and one public firm, with respect to $\alpha$. Fixed parameter values: $a_0=2$, $a_1=2.5$, $b=1$ and $\delta=0.4$. Time delays: $\tau_0=9$, $\tau_1=7$, $\tau_2=5$.}
  \label{975}
\end{figure}

\begin{figure}[htbp]
    \centering
\includegraphics[width=1\linewidth]{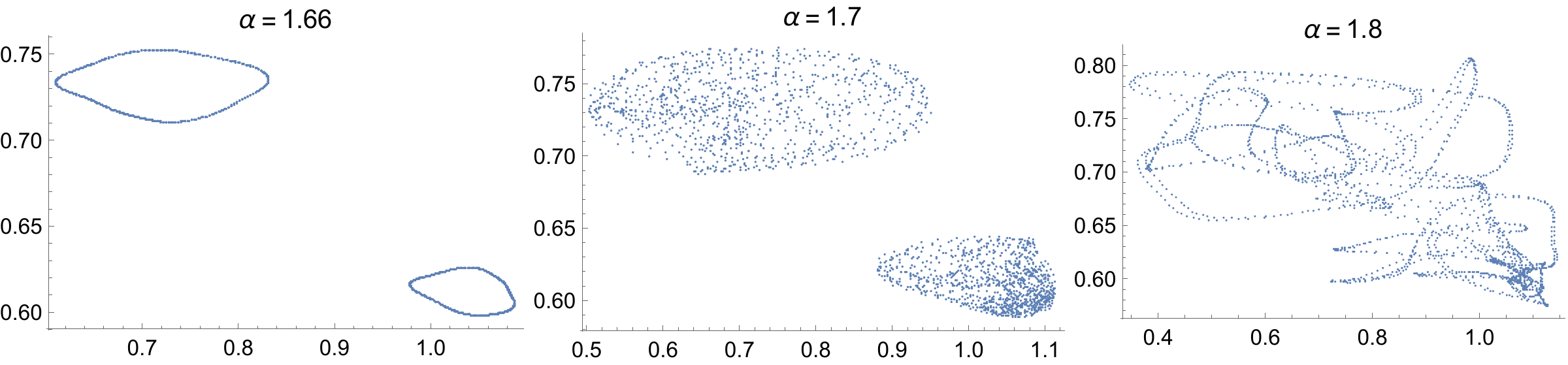}
  \caption{Phase portraits 
  for system \eqref{system} with $n=4$ private firms and one public firm, for various values of $\alpha$. Fixed parameter values: $a_0=2$, $a_1=2.5$, $b=1$ and $\delta=0.4$. Time delays: $\tau_0=9,\tau_1=7 $ and $\tau_2=5$.}
    \label{975faza}
\end{figure}

\section{Conclusions}
\label{sec:5}

The dynamics of an oligopoly game with product differentiation, in which $n$ private firms and a state-owned public firm coexist, have been examined in the current work. Two equilibrium points for the associated discrete-time mathematical model with three time delays have been established, and the local stability has been investigated.
The positive equilibrium $E_+$ is asymptotically stable when there is no delay and certain conditions are fulfilled. Additionally, we have identified the necessary conditions that ensure $E_+$ is asymptotically stable, irrespective of time delays.
We have demonstrated that in certain cases, the positive equilibrium point $E_+$ may be stabilized by the time delays. We have seen that as there are more private firms, the stability of the positive equilibrium requires smaller values of the degree of product differentiation, which is connected with slightly greater values of the adjustment parameter. When the time delays are large enough, numerical simulations show complicated dynamic behavior as well as the presence of chaos. Our findings emphasize the impact of different sets of time delays on the system's dynamics.

Our results generalize several findings from \cite{wang2021complex}, and they can be extended in the following ways: obtaining a thorough understanding of the Neimark-Sacker bifurcations occurring in the neighborhood of $E_+$; comprehending the potential paths leading to chaotic behavior in terms of the quantity of private firms and the time delays; and analyzing a mathematical model resembling this one in which the network of $n$ private firms does not have all-to-all connection.

 \bibliography{biblio.bib}

\end{document}